\newtheorem{thm}{Theorem}[section]
\newtheorem{lem}[thm]{Lemma}
\newtheorem{prop}[thm]{Proposition}
\newtheorem{conj}[thm]{Conjecture}
\newtheorem{cor}[thm]{Corollary}
\newtheorem{rmk}[thm]{Remark}
\newtheorem{prob}[thm]{Problem}
\newtheorem{qst}[thm]{Question}
\newtheorem{note}{Notations}
\theoremstyle{definition}
\DeclareMathOperator\sym{Sym}
\newcommand{\agl}[2]{\operatorname{AGL}_#1(#2)}
\newcommand{\psl}[2]{\operatorname{PSL}_#1(#2)}
\newcommand{\gl}[2]{\operatorname{GL}_{#1}(#2)}
\newcommand{\sln}[2]{\operatorname{SL}_{#1}(#2)}
\newcommand{\pg}[2]{\operatorname{PG}_{#1}(#2)}
\newcommand{\dih}[1]{\operatorname{D}_{#1}}
\newcommand{\itbf}[1]{{\bf{{\emph{{#1}}}}}}
\letcs\replicate{prg_replicate:nn}
\begin{document}	
	
	\title[]{On the intersection spectrum of $\psl{2}{q}$}
	
	{\author[A. Behajaina]{Angelot Behajaina\textsuperscript{1}}
		\address{Department of Mathematics, Technion - Israel Institute of Technology, Haifa, Israel}
		\thanks{\textsuperscript{1}Department of Mathematics, Technion - Israel Institute of Technology, Haifa, Israel}
		\email{abeha@campus.technion.ac.il}}
	{\author[R. Maleki]{Roghayeh Maleki\textsuperscript{2,3}}
		\address{University of Primorska, UP FAMNIT, Glagolja\v{s}ka 8, 6000 Koper, Slovenia}
		\thanks{\textsuperscript{2}University of Primorska, UP FAMNIT, Glagolja\v{s}ka 8, 6000 Koper, Slovenia}
		\thanks{\textsuperscript{3}University of Primorska, UP IAM, Muzejski trg 2, 6000 Koper, Slovenia}
		\email{rmaleki@uregina.ca}}
	
	\author[A.S. Razafimahatratra]{Andriaherimanana Sarobidy Razafimahatratra\textsuperscript{2,3,*}}
	\thanks{\textsuperscript{*} Corresponding author}
	\address{University of Primorska, UP FAMNIT, Glagolja\v{s}ka 8, 6000 Koper, Slovenia}\email{sarobidy@phystech.edu}

	\begin{abstract}
		
		Given a group $G$ and a subgroup $H \leq G$, a set $\mathcal{F}\subset G$ is called $H$\emph{-intersecting} if for any $g,g' \in \mathcal{F}$, there exists $xH \in G/H$ such that $gxH=g'xH$. The \emph{intersection density} of the action of $G$ on $G/H$ by (left) multiplication is the rational number 
		$\rho(G,H)$, equal to the maximum ratio $\frac{|\mathcal{F}|}{|H|}$, where $\mathcal{F} \subset G$ runs through all $H$-intersecting sets of $G$. The \emph{intersection spectrum} of the group $G$ is then defined to be the set
		\begin{align*}
			\sigma(G) := \left\{ \rho(G,H) : H\leq G \right\}.
		\end{align*}
		It was shown by Bardestani and Mallahi-Karai [{\it J. Algebraic
		Combin.}, 42(1):111–128, 2015] that if $\sigma(G) = \{1\}$, then $G$ is necessarily solvable. The natural question that arises is, therefore, which rational numbers larger than $1$ belong to $\sigma(G)$, whenever $G$ is non-solvable. In this paper, we study the intersection spectrum of the linear group $\psl{2}{q}$. It is shown that $2 \in \sigma\left(\psl{2}{q}\right)$, for any prime power $q\equiv 3 \pmod 4$. Moreover, when $q\equiv 1 \pmod 4$, it is proved that $\rho(\psl{2}{q},H)=1$, for any  odd index subgroup $H$ (containing $\mathbb{F}_q$) of the Borel subgroup (isomorphic to $\mathbb{F}_q\rtimes \mathbb{Z}_{\frac{q-1}{2}}$) consisting of all upper triangular matrices.
	\end{abstract}

	\subjclass[2010]{Primary 05C35, 05E16; Secondary 05C69, 20B05}
	
	\keywords{Derangement graphs, cocliques, projective special linear groups}
	
	\date{\today}
	
	\maketitle
	
	
	\section{Introduction}
	
	All groups considered in this paper are finite and all group actions are left actions. Let $G$ be a group and $H\leq G$ a subgroup. The action of $G$ on the (left) cosets of $H$ by left multiplication gives rise to a transitive group whose point stabilizers are conjugate to $H$. This action of $G$ on cosets of $H$ gives rise in turn to a group homomorphism $G\to \sym(G/H)$. If the latter is injective (i.e., the action is faithful), then $G$ is realized as a permutation group of $G/H$. In fact, any finite transitive permutation group $G\leq \sym(\Omega)$ arises in this way by taking $H$ to be the point stabilizer $G_\omega$ of any $\omega \in \Omega$. A subset $\mathcal{F} \subset G$ is $H$-\itbf{intersecting} if for any $g,h\in \mathcal{F}$, there exists $xH \in G/H$ such that  $gxH = hxH$, that is, $x^{-1}h^{-1}gx \in H$.
	We will use the term \emph{intersecting} instead of $H$-intersecting if $H$ is clear from the context. The \itbf{intersection density} of an $H$-intersecting set $\mathcal{F} \subset G$ is the rational number $\rho(\mathcal{F},H) = \frac{|\mathcal{F}|}{|H|}$. Moreover, the \itbf{intersection density} of the action of $G$ on $G/H$ is 
	\begin{align}
		\rho(G,H) := \max \left\{ \rho(\mathcal{F},H) : \mathcal{F} \subset G \mbox{ is $H$-intersecting} \right\}.
	\end{align}
	 Note that, since $H$ is clearly $H$-intersecting, we have $\rho(G,H) \geq 1$. The study of the intersection density of a group and the characterization of the intersecting sets with maximum intersection density can be considered as an extension of the \itbf{Erd\H{o}s-Ko-Rado (EKR) Theorem} to permutation groups (see \cite{erdos1961intersection,godsil2016erdos} for details). In particular, we say that the action of $G$ on $G/H$ has the \emph{EKR property} if $\rho(G,H) = 1$. If $\rho(\mathcal{F},H) = \rho(G,H)$ only when $\mathcal{F}$ is a coset of $H$, then we say that $G$ has the \emph{strict-EKR property}. Transitive groups with the EKR and/or the strict-EKR property have been extensively studied in the literature  (see \cite{cameron2003intersecting,Frankl1977maximum,godsil2009new,larose2004stable}). 
	
	A notion introduced by Kutnar, Maru\v{s}i\v{c}, and Pujol \cite{kutnar2023intersection} is that of the \emph{intersection density array} of a vertex-transitive graph. Given a vertex-transitive graph $X=(V,E)$ with automorphism group $G$, the \itbf{intersection density array} of $X$ is the sequence of rational numbers $\rho(X) = [\rho_1,\rho_2,\ldots,\rho_k]$  satisfying $1\leq \rho_1<\rho_2<\ldots<\rho_k$, such that for any transitive subgroup $H\leq G$, there exists $i\in \{1,2,\ldots,k\}$ with $\rho(H,H_v) = \rho_i$, ($v\in V$). 
	
	In this paper, we introduce a similar array for the actions of $G$. Here, we choose a representation as a set to ease the notations used hereafter. The \itbf{intersection spectrum} of a group $G$ is the set of rationals 
	\begin{align}
		\sigma (G)= \left\{ \rho(G,H) : H\leq G \right\}.
	\end{align}
	
	A standard technique to study the intersection density of a group action is to transform it into a graph theoretical problem. Given two groups $H\leq G$, the \itbf{derangement graph} $\Gamma_{G,H}$ is the graph whose vertex set is $G$ and two elements $x,y\in G$ are adjacent if $x^{-1}y \not \in \cup_{g\in G} g^{-1}Hg$, i.e., it does not fix any coset in $G/H$. It is clear that an $H$-intersecting set corresponds to a \itbf{coclique} \footnote{i.e., a set of vertices in which no two of them are adjacent.} of $\Gamma_{G,H}$ and vice versa. Consequently, one obtains the identity $\rho(G,H) = \frac{\alpha(\Gamma_{G,H})}{[G:H]}$, where $\alpha(\Gamma_{G,H})$ denotes  the independence number or coclique number of $\Gamma_{G,H}$, that is, the size of the largest cocliques.
	
	Determining the intersection spectrum of an arbitrary transitive group is, of course, a complex problem. Nevertheless, some strong results are known when a restriction is imposed on the intersection spectrum. For instance, it was proved in \cite{bardestani2015erdHos} that if $\sigma(G) = \{1\}$, then $G$ is solvable. The proof used in \cite{bardestani2015erdHos} relies on the fact that every minimal finite simple group \footnote{i.e., a non-abelian simple group all of whose proper subgroups are solvable.} admits an action that does not have the EKR property, by constructing an intersecting set larger than the point stabilizer. Consequently, every non-solvable group $G$ admits an action on cosets of a subgroup $H\leq G$ such that $\rho(G,H)>1$. Thus, it is logical to ask what types of rational numbers larger than $1$ are in $\sigma(G)$, for a non-solvable group $G$. Though it is natural to ask whether one can classify the intersection spectrum of non-solvable groups, such a question is rather ambitious. Here, we note that a result of Meagher, Spiga and Tiep \cite{meagher2016erdHos} shows that any finite $2$-transitive group has intersection density $1$. Hence, if $G$ is a group admitting a (faithful) $2$-transitive action, then $1 \in \sigma(G)$.  We therefore propose the following problem.
	\begin{prob}
		Let $G$ be any finite simple group admitting a $2$-transitive action.
		\begin{enumerate}[(a)]
			\item Find $\rho\in \sigma(G)$ such that $\rho>1$.\label{probb}
			\item Is there $\rho>1$ and an infinite family $(G_i)_{i\in I}$ of groups admitting $2$-transitive representations such that $\rho \in \sigma(G_i)$, for all $i\in I$?\label{proba}
		\end{enumerate}
		\label{prob}
	\end{prob}
	
	In order to study the intersection spectrum of a group, the complete list of conjugacy classes of its subgroups is needed. 
	Groups that admit certain geometric actions are the most natural candidate, since the subgroup lattice of some of these groups are well-understood. 
	
	Our first result deals with solvable groups admitting a faithful $2$-transitive action. 
	From the classification of $2$-transitive groups, we know that a solvable $2$-transitive group admits an elementary abelian socle. We give an example of solvable groups whose intersection spectra contain an arbitrary large prime power.
	
	Let $q = p^k$ ($k \geq 1$) be a prime power and $n\geq 1$ an integer. Recall that the group $\agl{n}{q}$ is the group consisting of all affine transformations $(A,b): v \in \mathbb{F}_q^n \mapsto Av+b \in \mathbb{F}_q^n$, where $b\in \mathbb{F}_q^n$ and $A \in \gl{n}{q}$. The group operation on $\agl{n}{q}$ is given by $(A,b)(A^\prime,b^\prime) = (AA^\prime,Ab^\prime+b)$, for any $(A,b),(A^\prime,b^\prime) \in \agl{n}{q}$. 
	\begin{thm}
	If $q = p^k$ ($k \geq 1$) is a prime power, then $p^{kn-i}\in \sigma\left(\agl{n}{q}\right)$, for all $1 \leq i \leq kn$.\label{thm:solvable}
	\end{thm}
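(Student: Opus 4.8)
The plan is to realize each value $p^{kn-i}$ by taking $H$ to be a suitable subgroup of the translation subgroup $T = \{(I,b) : b \in \mathbb{F}_q^n\}$ of $\agl{n}{q}$. Regarding $\mathbb{F}_q^n$ as an $\mathbb{F}_p$-vector space of dimension $kn$, the group $T$ is elementary abelian of order $p^{kn}$, and hence contains a subgroup of order $p^i$ for every $1 \le i \le kn$. Fixing such a subgroup $H = \{(I,v) : v \in V\}$ with $V$ an $\mathbb{F}_p$-subspace of size $p^i$, the definition $\rho(\agl{n}{q},H) = \max_{\mathcal{F}}|\mathcal{F}|/|H|$ reduces the problem to showing that the largest $H$-intersecting set has size exactly $q^n$; this yields $\rho(\agl{n}{q},H) = q^n/p^i = p^{kn-i}$.

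The crucial computation is to determine $\bigcup_g g^{-1}Hg$, the set of elements fixing some coset of $H$. Using the multiplication rule one checks that $(A,b)^{-1}(I,v)(A,b) = (I,A^{-1}v)$, so conjugating a translation depends only on the linear part $A \in \gl{n}{q}$ and sends $v \mapsto A^{-1}v$. As $H$ is nontrivial, $V$ contains some $v_0 \neq 0$, and since $\gl{n}{q}$ acts transitively on $\mathbb{F}_q^n \setminus \{0\}$, the set $\{A^{-1}v_0 : A \in \gl{n}{q}\}$ is all of $\mathbb{F}_q^n \setminus \{0\}$. Together with $0 \in V$ this forces $\bigcup_g g^{-1}Hg = T$, regardless of which nontrivial $H \le T$ was chosen. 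Hence, writing $x=(A,b)$ and $y=(A',b')$, we have $x^{-1}y = (A^{-1}A', A^{-1}(b'-b))$, which lies in $T$ if and only if $A = A'$; equivalently, $x$ and $y$ are adjacent in the derangement graph $\Gamma_{\agl{n}{q},H}$ precisely when $A \neq A'$.

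Thus $\Gamma_{\agl{n}{q},H}$ is the complete multipartite graph whose parts are the fibres $\{(A,b) : b \in \mathbb{F}_q^n\}$ of the map sending an affine transformation to its linear part, each part having size $q^n$. In a complete multipartite graph every coclique is contained in a single part, so the maximum $H$-intersecting set has size $q^n$, realized by any such fibre (whose elements pairwise differ by a translation and are therefore non-adjacent). Consequently $\rho(\agl{n}{q},H) = q^n/|H| = p^{kn-i}$, and letting $i$ run through $1,\dots,kn$ gives $p^{kn-i} \in \sigma(\agl{n}{q})$ for all $1 \le i \le kn$. The only delicate point is the collapse $\bigcup_g g^{-1}Hg = T$, resting on the transitivity of $\gl{n}{q}$ on nonzero vectors; once it is in hand the combinatorics is immediate, and notably the resulting density depends only on $|H|$, not on the internal structure of $H$.
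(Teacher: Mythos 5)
Your proof is correct, and it follows a genuinely different route from the paper's. The paper argues via the clique--coclique bound: it observes that the copy of $\gl{n}{q}$ formed by the elements $(A,0)$ is a clique in the derangement graph (a non-translation cannot lie in any conjugate of a subgroup of translations), which gives $\alpha \le |\agl{n}{q}|/|\gl{n}{q}| = q^n$, and then exhibits the translation subgroup $T$ as a coclique of exactly that size, using transitivity of $\gl{n}{q}$ on $\mathbb{F}_q^n\setminus\{0\}$ to conjugate the difference of any two translations into $H$. You instead determine the derangement graph completely: from the conjugation formula $(A,b)^{-1}(I,v)(A,b) = (I,A^{-1}v)$ and the same transitivity fact, you show $\bigcup_g g^{-1}Hg = T$ for \emph{every} nontrivial $H \le T$, so that adjacency means precisely ``distinct linear parts'' and the graph is complete multipartite with parts of size $q^n$. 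Both arguments thus rest on the same two facts --- conjugates of translations are translations, and $\gl{n}{q}$ is transitive on nonzero vectors --- but they are deployed differently: the paper needs the clique--coclique inequality as an external tool, while your structural description is self-contained (a coclique in a complete multipartite graph lies in a single part) and yields strictly more information. In particular, your argument makes explicit that the derangement graph, and hence the intersection density, depends only on $|H|$ and not on which subgroup of $T$ of order $p^i$ is chosen, and it additionally characterizes the maximum intersecting sets as exactly the fibres of the linear-part map, i.e.\ the cosets of $T$ --- a characterization the paper's counting argument does not directly provide. The paper's route is shorter if one takes the clique--coclique bound as given, which is the standard tool in this area; yours is more elementary and more informative.
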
	 
	
	Now, we turn our focus to the study the intersection spectrum of the group $\psl{2}{q}$, where $q$ is an odd prime power.
	In this case, since $\psl{2}{q}$ admits a doubly transitive action on the projective line $\pg{1}{q}$, we have $1\in \sigma(\psl{2}{q})$.

	In our next main result, we give an example for which Problem~\ref{prob}~\eqref{proba} is affirmative. 
	\begin{thm}
		The natural number $2$ belongs to $ \sigma(\psl{2}{q})$, for any odd prime power $q\equiv 3 \pmod 4$.\label{thm:main}
	\end{thm}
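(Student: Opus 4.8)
The plan is to take $H$ to have order $2$ and to reduce the entire question to two standard structural features of $G := \psl{2}{q}$. Fix an involution $t \in G$ and set $H = \langle t \rangle$, so that $|H| = 2$ and an $H$-intersecting set of size $4$ already yields $\rho(G,H) \geq 2$. The two facts I would invoke are: (F1) for odd $q$, the group $G$ has a single conjugacy class of involutions, so that $\bigcup_{x \in G} x H x^{-1} = \{1\} \cup I$, where $I$ denotes the set of all involutions of $G$ (this follows from the fact that involutions of $\psl{2}{q}$ are the images of trace-$0$ elements of $\sln{2}{q}$, which form a single class); and (F2) the Sylow $2$-subgroups of $G$ are dihedral, hence have $2$-rank exactly $2$, so every elementary abelian $2$-subgroup has order at most $4$ while Klein four-subgroups do exist. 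I would also record the elementary reduction that, after replacing $\mathcal{F}$ by $g_0^{-1}\mathcal{F}$ for some $g_0 \in \mathcal{F}$ (which preserves both the cardinality and the $H$-intersecting property, since the products $g^{-1}g'$ are unchanged), one may assume $1 \in \mathcal{F}$; the defining condition $x^{-1}(g^{-1}g')x \in H$ for some $x$ then reads $\mathcal{F} \subseteq \{1\} \cup I$ together with $g^{-1}g' \in \{1\} \cup I$ for all $g,g' \in \mathcal{F}$.

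For the lower bound I would exhibit an explicit $H$-intersecting set of size $4$. By (F2) the chosen involution $t$ lies in a Klein four-subgroup $V = \{1, t, s, ts\} \cong \mathbb{Z}_2 \times \mathbb{Z}_2$ of $G$. Every non-identity element of $V$ is an involution, hence by (F1) is conjugate into $H$; and since $V$ is abelian, every product $g^{-1}g'$ of elements of $V$ again lies in $V \subseteq \{1\} \cup I$. Thus $V$ is $H$-intersecting, and $\rho(V,H) = |V|/|H| = 2$, giving $\rho(G,H) \geq 2$.

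The heart of the matter is the matching upper bound $\rho(G,H) \leq 2$, i.e.\ that no $H$-intersecting set exceeds $4$ elements. Let $\mathcal{F}$ be $H$-intersecting with $1 \in \mathcal{F}$. By (F1) every $a \in \mathcal{F} \setminus \{1\}$ is an involution, and for any two distinct involutions $a,b \in \mathcal{F}$ the product $a^{-1}b = ab$ must again lie in $\{1\} \cup I$; as $ab \neq 1$ it is an involution, so $(ab)^2 = 1$ forces $ab = ba$. Hence all elements of $\mathcal{F}$ pairwise commute and have order dividing $2$, so $\langle \mathcal{F} \rangle$ is an elementary abelian $2$-subgroup containing $\mathcal{F}$. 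By (F2) its order is at most $4$, whence $|\mathcal{F}| \leq 4 = 2|H|$ and $\rho(G,H) \leq 2$. Combining the two bounds gives $\rho(G,H) = 2$, so $2 \in \sigma(G)$. The main work is therefore not the combinatorics, which is short, but pinning down the group-theoretic inputs (F1) and (F2); I expect the single-class-of-involutions statement (F1) to be the most delicate point to cite or verify cleanly. I note that this argument uses only that $q$ is odd and not the congruence $q \equiv 3 \pmod 4$, so if the intended proof instead relies on a specific larger subgroup $H$ for which $q \equiv 3 \pmod 4$ is genuinely essential, the combinatorial core would have to be replaced by an upper bound tailored to that $H$.
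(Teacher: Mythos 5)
Your proof is correct, but it takes a genuinely different route from the paper's, and a considerably more elementary one. The paper fixes the much larger subgroup $H = U_q \cong \mathbb{Z}_{\frac{q+1}{2}}$: the lower bound $\rho \geq 2$ comes from showing that the normalizer $V_q \cong \dih{q+1}$ is intersecting (Proposition~\ref{prop:vqinter}; this is exactly where $q \equiv 3 \pmod 4$ enters, since one needs every element of $V_q$ to have order dividing $\tfrac{q+1}{2}$), and the matching upper bound $\alpha(\Gamma_q) \leq q+1$ is obtained from the weighted ratio bound (Theorem~\ref{lem:ratio-bound}) applied to a weighted adjacency matrix built from the character table. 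You instead take $H \cong \mathbb{Z}_2$ and replace all of the character theory by two structural facts, both of which are correct and easy to cite: your (F1) can be read off the paper's own conjugacy class lists in \S\ref{sec:conjq14} and \S\ref{subsec:q34}, where in either congruence class exactly one conjugacy class consists of elements of order $2$; and your (F2) follows from King's classification \cite[Theorem~2.1]{king2005subgroup}, already used in the paper, since for odd $q$ every $2$-subgroup of $\psl{2}{q}$ is cyclic or dihedral, a dihedral $2$-group has elementary abelian subgroups of order at most $4$, and the Sylow $2$-subgroups have order at least $4$ (as $8 \mid q^2-1$), hence contain Klein four subgroups. Granting these, your combinatorial core is airtight: translation-invariance lets you assume $1 \in \mathcal{F}$, any two distinct involutions in $\mathcal{F}$ have a product that is again an involution and hence commute, so $\langle \mathcal{F} \rangle$ is elementary abelian of order at most $4$. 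Your closing worry is unfounded: the theorem only asserts that $2$ lies in the spectrum, so any witness subgroup is acceptable. What your route buys is significant: it works for every odd $q$, so it also gives $2 \in \sigma(\psl{2}{q})$ for $q \equiv 1 \pmod 4$, thereby proving the first assertion of the conjecture stated in the paper's introduction and answering Problem~\ref{prob2} with $H \cong \mathbb{Z}_2$ --- consistently with the paper's appendix, where $\rho(\psl{2}{q},C_2)=2$ for every odd $q$ tabulated, a fact the authors apparently did not exploit. What the paper's route buys is different: it determines the intersection density of a specific structured action of degree $q(q-1)$ (cosets of $\mathbb{Z}_{\frac{q+1}{2}}$), the setting relevant to the authors' conjecture about $\mathbb{Z}_{\frac{q-1}{2}}$, and the ratio-bound machinery additionally localizes the characteristic vectors of maximum cocliques in eigenspace terms (as in Corollary~\ref{cor} for the $\mathbb{M}_r$ action), a first step toward strict-EKR-type characterizations that a pure counting argument such as yours does not provide.
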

			
	In other words, we have $2 \in \bigcap_{q \equiv 3 \pmod 4} \sigma(\psl{2}{q})$. To prove this, we consider the action of $\psl{2}{q}$ on cosets of  some subgroups isomorphic to $\mathbb{Z}_{\frac{q+1}{2}}$. In the latter, the normalizer of a point stabilizer, which is  isomorphic to $\dih{q+1}$, is intersecting. The method used to prove the above theorem is the weighted version of the well-known Hoffman bound (see Lemma~\ref{lem:ratio-bound}). When $q \equiv 1 \pmod 4$, the normalizer of a point stabilizer in the action of $\psl{2}{q}$ on the  cosets of the subgroup isomorphic to $\mathbb{Z}_{\frac{q+1}{2}}$, used in the proof of Theorem~\ref{thm:main}, is no longer intersecting. As we will see later on, the fact that $-1$ is a square in $\mathbb{F}_q$ prevents the existence of an involution fixing a coset of the subgroup in question. By considering the action of $\psl{2}{q}$ on cosets of  some subgroups isomorphic to $\mathbb{Z}_{\frac{q-1}{2}}$, the fact that $q\equiv 1 \pmod 4$ implies that no involution is a derangement and the normalizer of a point stabilizer of the action, which is isomorphic to $\dih{q-1}$, is intersecting. Unfortunately, in this situation, we could not prove that a $\mathbb{Z}_{\frac{q-1}{2}}$-intersecting set in $\psl{2}{q}$ has size at most $q-1$. However, based on computational results (see Appendix~\ref{app}), we conjecture the following for $q\equiv 1 \pmod 4$.
	\begin{conj}
	We have
		\begin{align*}
			2\in \bigcap_{q \equiv 1 \pmod 4} \sigma\left(\psl{2}{q}\right).
		\end{align*}
		Moreover, this is realized by the action of $\psl{2}{q}$ on the cosets of  the subgroup isomorphic to $\mathbb{Z}_{\frac{q-1}{2}}$.
	\end{conj}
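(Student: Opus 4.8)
The plan is to prove the two inequalities $\rho\!\left(\psl{2}{q},\mathbb{Z}_{\frac{q-1}{2}}\right)\geq 2$ and $\rho\!\left(\psl{2}{q},\mathbb{Z}_{\frac{q-1}{2}}\right)\leq 2$ separately, the first being the easy direction and the second being the crux (and the reason the statement is only a conjecture). Throughout, write $G=\psl{2}{q}$ and let $H\cong\mathbb{Z}_{\frac{q-1}{2}}$ be a cyclic subgroup coming from the diagonal (split) maximal torus, so that $N_G(H)\cong\dih{q-1}$.

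For the lower bound I would show that $N_G(H)\cong\dih{q-1}$ is $H$-intersecting, which immediately produces an $H$-intersecting set of size $q-1=2|H|$ and hence $\rho\geq 2$. Since $N_G(H)$ is a group, the products $g'^{-1}g$ with $g,g'\in N_G(H)$ range over $\dih{q-1}$ itself, so it suffices to check that no element of $\dih{q-1}$ is a derangement. Writing $\dih{q-1}=H\cup Ht$ with $t$ an involution inverting $H$, every element of $H$ fixes the trivial coset, while the reflection coset $Ht$ consists of involutions. As $G$ has a single conjugacy class of involutions for odd $q$, it is enough to exhibit one involution conjugate into $H$: because $q\equiv 1\pmod 4$ the order $\frac{q-1}{2}$ of $H$ is even, so $H$ contains an involution; equivalently, the hypothesis that $-1$ is a square in $\mathbb{F}_q$ places the class of involutions inside the split torus. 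Hence every involution fixes some coset of $H$, the reflections are not derangements, and $\dih{q-1}$ is intersecting.

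For the upper bound I would mirror the proof of Theorem~\ref{thm:main}, replacing the non-split torus by the split one. The derangement graph $\Gamma_{G,H}$ is a normal Cayley graph on $G$ whose connection set $D$ is the set of derangements of the action on $G/H$, namely the union of the unipotent classes and the elliptic (non-split torus) classes; note that the involution class now lies in the split torus and is therefore excluded from $D$. Being a union of conjugacy classes, $D$ makes the eigenvalues of any class-function weighting $w\colon D\to\mathbb{R}_{\geq 0}$ computable from the character table of $\psl{2}{q}$ as $\eta_\chi(w)=\frac{1}{\chi(1)}\sum_{C\subseteq D}w_C\,|C|\,\chi(g_C)$, the degree being $d(w)=\sum_{C\subseteq D}w_C\,|C|=\eta_{\mathbf 1}(w)$. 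I would then feed these eigenvalues into the weighted ratio bound (Lemma~\ref{lem:ratio-bound}) and optimize the weights $w_C$ so as to drive the resulting bound down to $\alpha(\Gamma_{G,H})\leq q-1$, which is equivalent to $\rho\leq 2$.

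The hard part is exactly this optimization. As noted in the discussion preceding the conjecture, for $q\equiv 1\pmod 4$ the weighted Hoffman bound does not appear to reach the value $q-1$: the least eigenvalue $\tau(w)$ produced by the natural weights is not negative enough relative to the degree $d(w)$ to force $\frac{-\tau(w)}{d(w)-\tau(w)}\leq \frac{2}{q(q+1)}$, which is the threshold required since $|G|=\frac{q(q-1)(q+1)}{2}$. Concretely, I would first identify which irreducible characters (among the principal and discrete series, the Steinberg character of degree $q$, and the two characters of degree $\frac{q+1}{2}$) realize the minimum, and then tune a weighting supported on the elliptic classes together with the unipotent classes to cancel the obstruction; this amounts to a small linear program in the $w_C$ whose feasibility is the whole question. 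Failing that, one would abandon the spectral approach for the upper bound and instead attempt a direct structural classification of the maximum $H$-intersecting sets, showing that any such set is a single coset of $\dih{q-1}$. Proving that $q-1$ is genuinely extremal, rather than merely a lower bound, is precisely the gap that keeps the statement a conjecture.
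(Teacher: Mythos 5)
The statement you set out to prove is a conjecture in the paper, not a theorem: the authors explicitly say they could not show that a $\mathbb{Z}_{\frac{q-1}{2}}$-intersecting set in $\psl{2}{q}$ has size at most $q-1$, and they support the claim only by the computations in Appendix~\ref{app}. So there is no proof in the paper to compare against, and your proposal, as you yourself concede in the final paragraph, does not close the open half either.

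The part you do prove, the lower bound, is correct and matches the paper's own reasoning (sketched in the introduction and the concluding remarks): a subgroup is intersecting precisely when it is derangement-free; elements of $H$ fix the trivial coset; the reflections in $N_G(H)\cong\dih{q-1}$ are involutions; $\psl{2}{q}$ has a single class of involutions for odd $q$; and when $q\equiv 1\pmod 4$ that class meets the split torus $H$, since the involution is $c_3(\sqrt{-1})$ and $-1$ is a square in $\mathbb{F}_q$. Hence $\dih{q-1}$ is intersecting and $\rho\left(\psl{2}{q},H\right)\geq 2$. However, membership $2\in\sigma\left(\psl{2}{q}\right)$ requires the exact equality $\rho\left(\psl{2}{q},H\right)=2$, so the upper bound $\alpha(\Gamma_{G,H})\leq q-1$ is not optional; your proposal only outlines a strategy for it (a weighted ratio bound over the unipotent and elliptic classes, with the correctly computed threshold $\frac{-\tau}{d-\tau}\leq\frac{2}{q(q+1)}$) and records that the natural weightings, including the analogue of \eqref{eq:adj1} that succeeds for $q\equiv 3\pmod 4$, do not reach it. This is exactly the obstruction the authors faced. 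In short: your proposal correctly isolates the gap and proves everything that is currently provable, but it does not fill the gap, so the statement remains a conjecture.
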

	A workaround to providing a universal element of $\sigma(\psl{2}{q})$ for $q\equiv 1\pmod 4$ (i.e., proving the above conjecture) would be to find an action of $\psl{2}{q}$ that does not have the EKR property, for each $q \equiv 1 \pmod 4$.  Therefore, we also propose the following problem, which is a reformulation of Problem~\ref{prob}~\eqref{probb}.
	\begin{prob}
		For any $q\equiv 1 \pmod 4$, find $H\leq \psl{2}{q}$ such that $\rho\left(\psl{2}{q},H\right)>1$.\label{prob2}
	\end{prob}

	In \cite[Theorem~6.1]{hujdurovic2022intersection-cyclic}, the authors showed that $\frac{4}{3} \in \sigma(\psl{2}{q})$, when $q \equiv 1 \pmod{12}$ is not a power of $5$. In \cite[Conjecture~5.4]{meagher2022intersection}, it is conjectured that if $q\equiv 1\pmod 4$ and $q^2\equiv 1\pmod 5$, then $\frac{4}{3} \in \sigma\left(\psl{2}{q}\right)$. In all these cases, the actions of $\psl{2}{q}$ are on a cyclic group of order $3$.
	
	Since the conjugacy classes of subgroups of $\psl{2}{q}$ are known \cite[Theorem~2.1]{king2005subgroup}, we can do an exhaustive analysis of the intersection density of all actions of $\psl{2}{q}$; there are 22 families of such  conjugacy classes. However, it turns out that even for small values of $q$, the intersection density of the actions $\psl{2}{q}$ are computationally hard to obtain. In general, the known techniques available in the literature to compute the intersection density all fall short, in the several cases that we considered. Therefore, we instead give a family of subgroups $H\leq\psl{2}{q}$ that are ruled out for Problem~\ref{prob2}.		
	For any odd prime power $q$, the Borel subgroup $\mathbb{B}_q$ is the subgroup consisting of upper triangular matrices, modulo $\{\pm I\}$ ($I$ is the identity of $\sln{2}{q}$), in $\psl{2}{q}$. Clearly, we have $|\mathbb{B}_q| = \frac{q(q-1)}{2}$. For any odd $r \mid \frac{q-1}{2}$, there is a unique subgroup $\mathbb{M}_r \leq \mathbb{B}_q$ (see Notations~\ref{note:1e}), which is isomorphic to  $\mathbb{F}_q \rtimes \mathbb{Z}_{\frac{q-1}{2r}}$. We prove the following theorem.

	\begin{thm}\label{thm:secon}
		Let $q\equiv 1 \pmod 4 $ be an {odd} prime power. If $r\mid \frac{(q-1)}{2}$ is odd, then $\rho\left(\psl{2}{q},\mathbb{M}_r\right) = 1$.\label{thm2}
	\end{thm}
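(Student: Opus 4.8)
The plan is to work with the derangement graph $\Gamma := \Gamma_{\psl{2}{q},\mathbb{M}_r}$ and to show that $\alpha(\Gamma) \le |\mathbb{M}_r|$; since every coset of $\mathbb{M}_r$ is an $\mathbb{M}_r$-intersecting set, the opposite inequality holds trivially, and the two together give $\rho(\psl{2}{q},\mathbb{M}_r) = \alpha(\Gamma)/[\psl{2}{q}:\mathbb{M}_r] = 1$. Write $G = \psl{2}{q}$. The connection set of $\Gamma$ is the set $D$ of derangements of the action on $G/\mathbb{M}_r$, which is a union of conjugacy classes of $G$; hence $\Gamma$ is a normal Cayley graph and its spectrum is governed by $\operatorname{Irr}(G)$.

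First I would determine $D$ explicitly. Writing $\mathbb{M}_r = U \rtimes T_r$, where $U \cong \mathbb{F}_q$ is the unipotent radical of $\mathbb{B}_q$ and $T_r \cong \mathbb{Z}_{(q-1)/2r}$ is the unique index-$r$ subgroup of the diagonal torus $T \cong \mathbb{Z}_{(q-1)/2}$, an element is a non-derangement exactly when it is conjugate into $\mathbb{M}_r$. The $G$-equivariant $r$-to-$1$ covering $G/\mathbb{M}_r \to G/\mathbb{B}_q = \pg{1}{q}$, whose fibres are torsors under $\mathbb{B}_q/\mathbb{M}_r \cong \mathbb{Z}_r$, reduces this to a question on the projective line: $g$ fixes a coset of $\mathbb{M}_r$ iff $g$ fixes some point $P \in \pg{1}{q}$ at which the induced $\mathbb{Z}_r$-action on the fibre over $P$ is trivial. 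Going through the conjugacy classes of $G$, this shows that the identity and the two unipotent classes are non-derangements; that a split semisimple element $\operatorname{diag}(a,a^{-1})$ is a non-derangement iff it is conjugate into $T_r$; and that every non-split semisimple element is a derangement, since it fixes no point of the projective line. In particular, because $q \equiv 1 \pmod 4$ and $r$ is odd, the (even-order) group $T_r$ contains the unique involution of $T$, so involutions are non-derangements; this is precisely the feature that sets $q \equiv 1 \pmod 4$ apart.

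Next I would invoke the ratio bound (Lemma~\ref{lem:ratio-bound}). The eigenvalues of $\Gamma$ are $\eta_\chi = \frac{1}{\chi(1)}\sum_{d \in D}\chi(d)$ for $\chi \in \operatorname{Irr}(G)$, with the trivial character giving the valency $|D|$. It suffices to prove that every nontrivial $\eta_\chi$ satisfies $\eta_\chi \ge -|D|/(r(q+1)-1)$ with equality attained somewhere, since then the ratio bound returns exactly $\alpha(\Gamma) \le |G|/\bigl(r(q+1)\bigr) = |\mathbb{M}_r|$, the extremal cocliques being the cosets. The parity of $r$ is what makes the relevant part of the spectrum tractable: decomposing the permutation module gives $\operatorname{Ind}_{\mathbb{M}_r}^{G}\mathbf{1} = \bigoplus_{\psi \in \widehat{\mathbb{Z}_r}} \operatorname{Ind}_{\mathbb{B}_q}^{G}\widetilde\psi$, and since $r$ is odd, $\mathbb{Z}_r$ has no character of order $2$, so each summand with $\psi \ne 1$ is an irreducible principal series character of degree $q+1$ (no splitting into two degree-$(q+1)/2$ pieces occurs), while the $\psi = 1$ summand is $\mathbf{1} \oplus \mathrm{St}$. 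Thus the irreducible constituents of $G/\mathbb{M}_r$ — the subspaces in which the characteristic vectors of the cosets live — are just the trivial character, the Steinberg character, and $r-1$ genuine principal series; the EKR property of the $2$-transitive Borel action \cite{meagher2016erdHos} pins down the trivial and Steinberg contributions.

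The main obstacle is the eigenvalue estimate itself. With the known character table of $\psl{2}{q}$ in hand, I would evaluate $\sum_{d\in D}\chi(d)$ class by class; the delicate part is the $r$-dependent split-semisimple contribution, which amounts to summing character values over the coset set $T \setminus T_r$ and behaves like a finite character sum. One then has to confirm that the minimum of the $\eta_\chi$ equals $-|D|/(r(q+1)-1)$, is attained on a constituent of the permutation module, and — the genuinely delicate point — that none of the remaining irreducibles (the cuspidal characters of degree $q-1$ and the two characters of degree $(q+1)/2$) yields a strictly smaller eigenvalue. Should the uniform ratio bound turn out to be slightly lossy, I would fall back on the weighted version of Lemma~\ref{lem:ratio-bound}, choosing weights on the classes in $D$ that flatten the spectrum onto the critical eigenvalue; I expect this balancing of the split and non-split contributions to be where most of the work lies.
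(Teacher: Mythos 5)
Your preparatory work is correct and essentially coincides with the paper's: the identification of the derangements (unipotents and split elements conjugate into $T_r$ are the only non-derangements; every non-split element is a derangement) matches Lemmas~\ref{lem1}, \ref{lem:c3fixss} and \ref{lem:c4isder}, and your induction-in-stages decomposition of the permutation module is exactly the paper's formula \eqref{eq:perm-char}, derived more structurally. The genuine gap is in the step that constitutes the actual proof: the eigenvalue estimate. Your primary route --- the ratio bound applied to the \emph{unweighted} derangement graph --- provably fails for every odd $r\geq 3$. Indeed, take any nontrivial $\alpha\in\operatorname{Irr}(\mathbb{F}_q^*)$ with $\alpha(-1)=1$ that is trivial on $\langle\omega^r\rangle$ (such $\alpha$ exists precisely when $r\geq 3$). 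By Lemma~\ref{lem:first}, the sum of $\rho(\alpha)$ over the split derangement classes equals $q(q+1)\cdot\left(-\tfrac{q-1}{2r}\right)$, while $\rho(\alpha)$ vanishes on all non-split classes; hence $\eta_{\rho(\alpha)}=-\tfrac{q(q-1)}{2r}$, and this is the least eigenvalue (the Steinberg eigenvalue $\tfrac{(q-1)(2r-q-1)}{4r}$ is less negative, and all other eigenvalues are $\geq 0$). Since the valency is $|D|=\tfrac{q(q-1)(2rq-q-1)}{4r}$, the unweighted ratio bound yields only $\alpha(\Gamma)\leq \tfrac{q(q^2-1)}{2rq-q+1}$, which exceeds $|\mathbb{M}_r|=\tfrac{q(q-1)}{2r}$ by the factor $\tfrac{2r(q+1)}{2rq-q+1}>1$; for example, for $q=13$ and $r=3$ it gives $\alpha(\Gamma)\leq 33$ while $|\mathbb{M}_3|=26$. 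Equivalently, the inequality $\eta_\chi\geq -|D|/(r(q+1)-1)$ that you hope to establish is false whenever $r\geq 3$: the deficit is a constant factor of roughly $\tfrac{2r}{2r-1}$ in the density, not something recoverable by integrality.

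Consequently the entire proof rests on your fallback --- a weighted adjacency matrix --- for which you propose no weights and verify nothing; this is exactly where the content of the paper lies. The paper takes weight $\tfrac{2r}{q(q-1)}$ on each split derangement class and weight $\tfrac{2\left(r(q+1)+q-1\right)}{q(q-1)^2}$ on each non-split class. With these choices (and the character sums of Lemmas~\ref{lem:first}, \ref{lem:second} and \ref{lem:third}) the largest eigenvalue is $r(q+1)-1$, the Steinberg character and the $\tfrac{r-1}{2}$ principal-series constituents of the permutation character all give eigenvalue exactly $-1$, and every remaining character gives $0$ or a positive value, so Theorem~\ref{lem:ratio-bound} returns exactly $|\mathbb{M}_r|$. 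Asserting that suitable weights should exist because they ``flatten the spectrum'' is a statement of intent, not an argument; producing them and verifying their spectrum is the proof, and your proposal stops short of it.
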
		

		When $r=1$, then we recover the result in \cite{meagher2011erdHos} about the intersection density of $\psl{2}{q}$ in its natural action on $\pg{1}{q}$. In fact, we can show a statement that is slightly stronger than just the one given in Theorem~\ref{thm2} (see Corollary \ref{rem:pluthm6}). This, in turn, is a step in the characterization of the largest intersecting sets in the action of $\psl{2}{q}$ on cosets of  $\mathbb{M}_r$.
				
		This paper is organized as follows. In Section~\ref{sect:background}, we give all necessary background results needed in the paper. In Section~\ref{sect:thm-abelian}, we prove Theorem~\ref{thm:solvable}. In Section~\ref{sect:cc} and Section~\ref{sect:rep}, we give the necessary background about the representation theory of $\psl{2}{q}$. Section~\ref{sect:proof-3mod4} and Section~\ref{sect:thm-1mod4} are devoted to the proof of Theorem~\ref{thm:main} and Theorem~\ref{thm2}, respectively.
			
			\section{Background results}\label{sect:background}
			
			We recall some facts about association schemes and spectral graph theory techniques that are needed throughout the proofs.
			
			\subsection{The ratio bound}	
			For a group $G$ and a subgroup $H\leq G$, recall that  $\Gamma_{G,H}$ is the \emph{derangement graph} of the action of $G$ on cosets of $H$. A derangement in this action is an element of $G$ that does not fix any coset of $H$, or equivalently, an element that is not conjugate to an element of $H$. It is easy to see that the derangement graph of a transitive group $G$ is regular with valency equal to the number of derangements. In fact, the left-regular representation $\operatorname{R}(G) = \left\{ \rho_g: x \in G \mapsto gx \in G :\ g\in G \right\}$ is in the automorphism group of $\Gamma_{G,H}$. Further, $\Gamma_{G,H}$ is a \emph{Cayley graph} whose connection set is the set of all derangements. 
			
			One important aspect of the derangement graph of a group action is that it encodes the intersecting sets of the group. Recall that a \emph{coclique} in a graph is a subset of vertices that are pairwise non-adjacent; we will denote the maximum size of cocliques in a graph $X = (V,E)$ by $\alpha(X)$. For any group $G$ and $H\leq G$, a subset $S\subset G$ is $H$-intersecting if and only if it is a coclique in the derangement graph $\Gamma_{G,H}$. Therefore, we can reformulate the intersection density of the action of $G$ to be
			\begin{align*}
				\rho(G,H) = \frac{\alpha(\Gamma_{G,H})}{|H|}. 
			\end{align*}
		Hence, we may use all graph theoretical tools at our disposal to find the invariant $\alpha(\Gamma_{G,H})$. One of the most powerful tools to compute $\alpha(\Gamma_{G,H})$ is via a weighted adjacency matrix.
			
			For the remainder of this subsection, we let $G$ be a group and $H\leq G$ a subgroup. Recall that a symmetric matrix $B$ indexed by the group elements in $G$ in its rows and columns is called a \itbf{weighted adjacency matrix} of $\Gamma_{G,H}$ if $B(g,h) = 0$, for any $g$ and $h$ that are non-adjacent in $\Gamma_{G,H}$. For any subset $S\subset G$, the \emph{characteristic vector} of $S$ is the vector $v_S$ indexed in its rows by elements of $G$, and such that $v_S(x)$ is equal to $1$ if $x \in S$ and $0$ otherwise. We also let $\mathbf{1}_{|G|}$ denote the vector of $\mathbb{Z}^{|G|}$ whose entries consist of $1$.
			
			Recall the following theorem which is a generalization of the Hoffman bound (see \cite{godsil2016erdos} for the proof).	
			\begin{thm}
				If $B$ is a weighted adjacency matrix of $\Gamma_{G,H}$ with maximum eigenvalue $d$ and minimum eigenvalue $\tau$, then
				\begin{align*}
					\alpha ( \Gamma_{G,H}) \leq \frac{|G|}{1-\frac{d}{\tau}}.
				\end{align*}
				Moreover, if equality holds for some coclique $S$ of $\Gamma_{G,H}$, then $v_S - \frac{|S|}{|G|}\mathbf{1}_{|G|}$ is an eigenvector of $B$ for the smallest eigenvalue $\tau$.\label{lem:ratio-bound}
			\end{thm}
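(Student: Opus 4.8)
The plan is to run the standard spectral argument underlying the Hoffman ratio bound, adapted to an arbitrary symmetric weighted adjacency matrix $B$. Two structural facts drive the proof. First, since $\Gamma_{G,H}$ is a Cayley graph (hence vertex-transitive) and the weighted adjacency matrices relevant here respect this structure — their weights depend only on $g^{-1}h$, so that $B$ commutes with the regular representation $\operatorname{R}(G)$ — the all-ones vector $\mathbf{1}_{|G|}$ is an eigenvector of $B$, and its eigenvalue is exactly the maximum eigenvalue $d$. Because $B$ is symmetric, I can then fix an orthonormal eigenbasis $z_1 = \frac{1}{\sqrt{|G|}}\mathbf{1}_{|G|}, z_2, \ldots, z_{|G|}$ with corresponding eigenvalues $d = \lambda_1 \geq \lambda_2 \geq \cdots \geq \lambda_{|G|} = \tau$. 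Second, if $S$ is a coclique of $\Gamma_{G,H}$ with characteristic vector $v_S$, then since $\Gamma_{G,H}$ is loopless and $B$ vanishes on every non-adjacent pair, all entries $B(g,h)$ with $g,h \in S$ are zero, whence $v_S^\top B v_S = 0$.

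With these in place, I would expand $v_S = \sum_i c_i z_i$ in the eigenbasis. The coefficient in the trivial direction is $c_1 = \ip{v_S}{z_1} = \frac{|S|}{\sqrt{|G|}}$, while $\sum_i c_i^2 = \|v_S\|^2 = |S|$ and $0 = v_S^\top B v_S = \sum_i \lambda_i c_i^2$. Using $\lambda_i \geq \tau$ for every $i \geq 2$, I obtain
\begin{align*}
0 \;=\; d\,c_1^2 + \sum_{i \geq 2} \lambda_i c_i^2 \;\geq\; d\,c_1^2 + \tau\left(|S| - c_1^2\right).
\end{align*}
Substituting $c_1^2 = |S|^2/|G|$, rearranging as $(d-\tau)\frac{|S|}{|G|} \leq -\tau$, and dividing by $-\tau > 0$ (note $\tau < 0$ because $B$ has zero trace and is nonzero) then yields $|S| \leq \frac{-\tau |G|}{d - \tau} = \frac{|G|}{1 - d/\tau}$. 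Taking $S$ to be a maximum coclique gives the asserted bound on $\alpha(\Gamma_{G,H})$.

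For the equality case I would trace back through the single inequality invoked above. Equality forces $\sum_{i \geq 2}(\lambda_i - \tau)c_i^2 = 0$, and since each summand is nonnegative, $c_i = 0$ whenever $\lambda_i \neq \tau$. Hence the vector $v_S - c_1 z_1 = v_S - \frac{|S|}{|G|}\mathbf{1}_{|G|}$ is supported entirely on the $\tau$-eigenspace, i.e.\ it is an eigenvector of $B$ for the smallest eigenvalue $\tau$, as claimed.

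The main (and essentially only) obstacle I anticipate is the justification that $\mathbf{1}_{|G|}$ is an eigenvector of $B$ whose eigenvalue coincides with the maximum $d$; everything after that is a routine Rayleigh-quotient computation. This is where the vertex-transitive structure of the Cayley graph $\Gamma_{G,H}$ is used, ensuring the weighted adjacency matrices under consideration lie in the adjacency algebra and fix the trivial direction. The sign condition $\tau < 0$ is what makes the bound finite, and it is immediate from $\operatorname{tr}(B) = 0$.
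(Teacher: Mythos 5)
The paper itself gives no proof of Theorem~\ref{lem:ratio-bound}; it is quoted from \cite{godsil2016erdos}. Measured against the standard proof of the weighted ratio bound, everything in your argument downstream of the spectral decomposition is correct: $v_S^\top B v_S = 0$ for a coclique $S$ (including the diagonal, since the graph is loopless), the estimate $0 \ge d\,c_1^2 + \tau\bigl(|S| - c_1^2\bigr)$, the rearrangement to $|S| \le |G|/(1 - d/\tau)$, and the equality analysis forcing $c_i = 0$ whenever $\lambda_i \neq \tau$, which is exactly the assertion about $v_S - \frac{|S|}{|G|}\mathbf{1}_{|G|}$.

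The genuine gap is at the step you yourself single out as the crux: the claim that $\mathbf{1}_{|G|}$ is an eigenvector of $B$ whose eigenvalue is the maximum eigenvalue $d$. The theorem as stated allows $B$ to be an \emph{arbitrary} symmetric matrix vanishing on non-adjacent pairs; nothing in that definition forces the weights to depend only on $g^{-1}h$, so $B$ need not commute with $\operatorname{R}(G)$ --- vertex-transitivity is a property of the graph $\Gamma_{G,H}$, not of the weighting. Concretely, $B$ could be supported on a star inside $\Gamma_{G,H}$, in which case $\mathbf{1}_{|G|}$ is not an eigenvector at all; and the inequality genuinely needs some such regularity, since for general graphs it is false without it (the unweighted star $K_{1,3}$ has $d = -\tau = \sqrt{3}$, giving the ``bound'' $2$, while $\alpha = 3$). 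So any correct proof of the statement as literally written must use regularity of $B$ or vertex-transitivity in an essential way, and the Rayleigh-quotient computation alone cannot supply it. Moreover, even if $B$ were $G$-invariant, your parenthetical claim would still need an argument: for invariant weights the eigenvalue of $\mathbf{1}_{|G|}$ is the common row sum, which need not be the \emph{largest} eigenvalue when some weights are negative, and if it is smaller than $d$ the computation only yields the weaker bound with that row sum in place of $d$. What your argument actually proves is the theorem under the additional hypothesis $B\mathbf{1}_{|G|} = d\,\mathbf{1}_{|G|}$ with $d$ the largest eigenvalue --- which is how the weighted ratio bound is stated in the cited source, and which does hold for the matrices \eqref{eq:adj1} and \eqref{eq:B} used in this paper: they are nonnegative linear combinations of conjugacy-class matrices, so $\mathbf{1}_{|G|}$ is an eigenvector (Theorem~\ref{thm:evalues}, trivial character), and its eigenvalue is the spectral radius because the matrix is entrywise nonnegative with constant row sums. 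The clean fix is to add that hypothesis to the statement (or verify it at each application); as written, your key step is asserted rather than proved and does not follow from the stated hypotheses.
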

			
			The upper bound given in the above lemma requires the knowledge of the extremal (the largest and the least) eigenvalues of the weighted adjacency matrix. Therefore, it is in our interest to find a weighted adjacency matrix whose extremal eigenvalues are easy to compute. In the next subsection, we will give a classical method to find a weighted adjacency matrix whose eigenvalues are fairly easy to compute.
			
			\subsection{The conjugacy class association scheme}
			Let $\mathcal{B} = \{A_0,A_1,\ldots,A_{k-1}\}$ be a set of $n\times n$ $01$-matrices. We say that $\mathcal{B}$ is an \itbf{association scheme} if it satisfies
			\begin{enumerate}[(1)]
				\item $A_0 = I_n$, where $I_n$ is the $n\times n$ identity matrix,
				\item $A_0+A_1+ \ldots + A_{k-1} = J_n$, where $J_n$ is the $n\times n$ matrix whose entries consist of $1$,
				\item for any $i \in \{0,1,\ldots,k-1\}$, we have $A_i^T \in \mathcal{B}$,
				\item for any $i,j \in \{0,1,\ldots,k-1\}$, we have 
				\begin{align*}
					A_i A_j &= \sum_{t = 0}^{k-1} p_{ij}^t A_t,
				\end{align*} 
				for some non-negative integers $(p_{ij}^t)$.
			\end{enumerate}		
			Given a finite group $G$ with conjugacy classes $C_0=\{1\},C_1,\ldots,C_{k-1}$, we define the $|G| \times |G|$ matrices $(A_i)_{i= 0,1,\ldots,k-1}$ which are indexed by $G$ in their rows and columns, such that
			\begin{align*}
				A_i(g,h) = 
				\begin{cases}
					1 & \mbox{ if } h^{-1}g \in C_i,\\
					0 & \mbox{ otherwise,}
				\end{cases}
			\end{align*}
			for any $i\in \{0,1,\ldots,k-1\}$ and $g,h \in G$. The set $\mathcal{B}(G) = \{ A_i : i\in \{0,1,\ldots,k-1\} \}$ forms an association scheme called the \itbf{conjugacy class (association) scheme} of $G$.
			
			It is not hard to see that a matrix in $\mathcal{B}(G)$ is in fact the adjacency matrix of the Cayley digraph of $G$ whose connection set is the corresponding conjugacy class. Hence, the adjacency matrix of the derangement graph $\Gamma_{G,H}$ is a sum of matrices in $\mathcal{B}(G)$. In general, a (symmetric) linear combination of matrices in $\mathcal{B}(G)$, corresponding to conjugacy classes of derangements, is a weighted adjacency matrix of the derangement graph $\Gamma_{G,H}$.
			
			We end this section by recalling a result from \cite{babai1979spectra} about the eigenvalues of these types of weighted adjacency matrix. We will formulate this result in terms of the derangement graph, for convenience.
			
			\begin{thm}
				Let $G$ be a  group and $H\leq G$. Assume that $C_1,C_2,\ldots,C_t$ ($t \geq 1$) are conjugacy classes of derangements for the action of $G$ on the cosets of $H$ and let $A_1,A_2,\ldots,A_t$ be the corresponding matrices in $\mathcal{B}(G)$.  Let $(\alpha_i)_{i = 1,2,\ldots,t} \subset  \mathbb{R}$ be any sequence such that $\mathcal{A}= \alpha_1 A_1+ \alpha_2 A_2+ \ldots + \alpha_t A_t$ is symmetric, that is, $\mathcal{A}$ is a weighted adjacency matrix of $\Gamma_{G,H}$. Then its eigenvalues are of the form
				\begin{align*}
					\lambda_\chi = \frac{1}{\chi(1)}\sum_{i = 1}^t \alpha_i \sum_{g \in C_i} \chi(g),
				\end{align*}
				where $\chi$ runs through all (complex) irreducible characters of $G$.

				For any irreducible character $\chi$ of $G$, let $V_\chi$ be the corresponding regular $\mathbb{C}G$-module. Then, the eigenspace of $\mathcal{A}$ corresponding to the eigenvalue $\lambda$ is the $\mathbb{C}G$-module
				\begin{align*}
					U_\lambda:=\bigoplus_{\chi} V_\chi,
				\end{align*}
				where the sum runs through all irreducible characters $\chi$ such that $\lambda_\chi =\lambda$.
				\label{thm:evalues}
			\end{thm}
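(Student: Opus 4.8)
The plan is to recognize that each scheme matrix $A_i$ acts on the regular module $\mathbb{C}G$ as multiplication by the class sum $\widehat{C_i}=\sum_{c\in C_i}c$, and then to diagonalize $\mathcal{A}$ using the decomposition of $\mathbb{C}G$ into its simple (Wedderburn) components. First I would identify $\mathbb{C}^{|G|}$ with the group algebra $\mathbb{C}G$, writing a vector $f$ as $\sum_{g\in G}f(g)\,g$. Reading off the definition $A_i(g,h)=1\iff h^{-1}g\in C_i$, a direct computation gives
\begin{align*}
(A_i f)(g)=\sum_{h\,:\,h^{-1}g\in C_i}f(h)=\sum_{c\in C_i}f(gc^{-1}),
\end{align*}
which is exactly right multiplication by $\widehat{C_i}$ on $\mathbb{C}G$. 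Hence $\mathcal{A}=\sum_{i=1}^{t}\alpha_i A_i$ is right multiplication by the central element $z=\sum_{i=1}^{t}\alpha_i\widehat{C_i}\in Z(\mathbb{C}G)$.

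Next I would invoke the Wedderburn decomposition $\mathbb{C}G=\bigoplus_\chi V_\chi$, where $\chi$ runs over the irreducible characters of $G$ and $V_\chi$ is the corresponding simple two-sided ideal (the $\chi$-isotypic part of the regular module), isomorphic as an algebra to $M_{\chi(1)}(\mathbb{C})$ and of dimension $\chi(1)^2$. Since each $\widehat{C_i}$ is central, Schur's lemma forces it to act on the irreducible module underlying $V_\chi$ as a scalar $\omega_\chi(\widehat{C_i})$; taking the trace of this scalar operator, one reads off the standard central-character value
\begin{align*}
\omega_\chi(\widehat{C_i})=\frac{1}{\chi(1)}\sum_{g\in C_i}\chi(g).
\end{align*}
Because $z$ is central (so that left and right multiplication by it agree, and both send the whole component $V_\chi$ to itself as a scalar), $\mathcal{A}$ acts on $V_\chi$ as the single scalar
\begin{align*}
\lambda_\chi=\sum_{i=1}^{t}\alpha_i\,\omega_\chi(\widehat{C_i})=\frac{1}{\chi(1)}\sum_{i=1}^{t}\alpha_i\sum_{g\in C_i}\chi(g),
\end{align*}
which is the claimed formula.

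Finally, since the simple components satisfy $\bigoplus_\chi V_\chi=\mathbb{C}G$ and $\sum_\chi\dim V_\chi=\sum_\chi\chi(1)^2=|G|$, and $\mathcal{A}$ restricts to the scalar $\lambda_\chi$ on each $V_\chi$, the operator $\mathcal{A}$ is already diagonalized by this decomposition: its spectrum is precisely $\{\lambda_\chi\}$, and the eigenspace attached to a value $\lambda$ is the $\mathbb{C}G$-submodule $U_\lambda=\bigoplus_{\chi\,:\,\lambda_\chi=\lambda}V_\chi$, exactly as asserted. I expect the delicate points to be bookkeeping rather than conceptual: getting the left/right convention straight so that $A_i$ is matched with $\widehat{C_i}$ (and not with $\widehat{C_i^{-1}}$), and observing that the symmetry hypothesis on $\mathcal{A}$ is what guarantees the $\lambda_\chi$ are real and that $\mathcal{A}$ is orthogonally diagonalizable, even though the scalar computation itself needs no such hypothesis. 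The decomposition of $\mathbb{C}G$ into simple two-sided ideals does all the work, so the main obstacle is simply correctly invoking the central-character formula and tracking the module structure of the eigenspaces.
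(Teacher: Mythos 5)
Your proposal is correct and complete: the identification of each $A_i$ with right multiplication by the class sum $\widehat{C_i}$, the central character formula $\omega_\chi(\widehat{C_i})=\frac{1}{\chi(1)}\sum_{g\in C_i}\chi(g)$ via Schur's lemma, and the Wedderburn decomposition $\mathbb{C}G=\bigoplus_\chi V_\chi$ into simple two-sided ideals together give exactly the stated eigenvalues and eigenspaces, and your dimension count $\sum_\chi\chi(1)^2=|G|$ correctly upgrades ``contains'' to ``equals'' for each eigenspace. The paper itself offers no proof, deferring to Babai's result on spectra of Cayley graphs, and your class-sum argument is precisely the standard proof underlying that citation, including the accurate side remarks that the symmetry hypothesis is needed only for realness of the $\lambda_\chi$ and that centrality of $\widehat{C_i}$ makes the left/right bookkeeping harmless.
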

			
			\section{Proof of Theorem~\ref{thm:solvable}}\label{sect:thm-abelian}
			
		Let $q=p^k$ ($k \geq 1$) be a prime power and let $1\leq i\leq kn$. Let $H_i$ be a subgroup of $\mathbb{F}_q^n$ of order $p^i$. Consider the subgroup $\mathbf{E}_i= \left(I,H_i\right)$ of $\agl{n}{q}$ and its derangement graph $\Gamma_{n,q} = \Gamma_{\agl{n}{q},\mathbf{E}_i}$.

				First, we prove that $\Gamma_{n,q}$ has a clique of order $\prod_{j=0}^{n-1} (q^n-q^j)$. Then, we prove that there is a coclique attaining the bound given by the clique-coclique bound \cite[Corollary 2.1.2]{godsil2016erdos}. 
				
				Let us prove that the subgroup $\gl{n}{q}$ of $\agl{n}{q}$ is indeed a clique in $\Gamma_{n,q}$. Let $A,B \in \gl{n}{q}$ be distinct. Then,
				\begin{align*}
					(A,0)^{-1} (B,0) = (A^{-1},0)(B,0) = (A^{-1}B,0)
				\end{align*}
				is a derangement since $(A^{-1}B,0)$ cannot be in any conjugate of $\mathbf{E}_i$. Therefore, $\gl{n}{q}$ is a clique in $\Gamma_{n,q}$. By the clique-coclique bound, we have
				\begin{align}
					\alpha(\Gamma_{n,q}) \leq \frac{|\agl{n}{q}|}{|\gl{n}{q}|} = q^n.\label{eq:bound-agl}
				\end{align}
				Next, we prove that $\mathbb{F}_q^n$ is a coclique in $\Gamma_{n,q}$. Let $\mathbf{u}$ and $\mathbf{v}$ be two elements of $\mathbb{F}_q^n$ and let $A \in \gl{n}{q}$ such that $A(\mathbf{v}-\mathbf{u}) \in H_i$. Since $(A,0)(I,\mathbf{u})^{-1} (I,\mathbf{v})(A^{-1},0) = (I,A(\mathbf{v}-\mathbf{u})) \in \mathbf{E}_i$, the permutations $(I,\mathbf{u})$ and $(I,\mathbf{v})$ are intersecting. Therefore, $\mathbb{F}_q^n$ is intersecting. Combining this with the bound in \eqref{eq:bound-agl}, we have
				\begin{align*}
					\alpha(\Gamma_{n,q}) = q^n.
				\end{align*}
				Consequently, the intersection density of the group $\agl{n}{q}$ acting on cosets of $\mathbf{E_i}$ is equal to $\frac{q^n}{p^i}$, which completes the proof.

			\section{Conjugacy classes of $\psl{2}{q}$}\label{sect:cc}
			In this section, we recall the conjugacy classes of $\psl{2}{q}$, for any odd prime power $q$, as computed in \cite{adams2002character}. Moreover, we establish some basic results that will be used later. 
			
Let $\omega$ be a primitive element of $\mathbb{F}_q$, i.e., $\mathbb{F}_q^* = \langle \omega\rangle$. 
			Consider $\Delta \in \mathbb{F}_q \setminus (\mathbb{F}_q)^2$ and let $\delta= \sqrt{\Delta}$. For convenience, when $q \equiv 3 \pmod{4}$, we take $\Delta=-1$. Then, $\mathbb{F}_{q^2} = \mathbb{F}_q(\delta)$ is a quadratic extension of $\mathbb{F}_q$. If $q \equiv 1 \pmod 4$, then we let $\sqrt{-1} = \omega^{\frac{q-1}{4}}$. 
			
			\subsection{When $q \equiv 1 \pmod 4$}\label{sec:conjq14}  There are $\frac{q+5}{2}$ conjugacy classes of $\psl{2}{q}$: 
			\begin{enumerate}[(T1)]
				\item The conjugacy class consisting only of the identity matrix $\{\overline{I}\}$.
				\item The conjugacy class consisting of all matrices conjugate to $$c_2(1):=\overline{\begin{bmatrix}
						1 & 1\\
						0 &  1
				\end{bmatrix}}.$$
				\noindent We let $C_2(1)$ be the corresponding matrix in the conjugacy class scheme (of $\psl{2}{q}$).
				\item The conjugacy class consisting of all matrices conjugate to $$c_2(\Delta):=
				\overline{		
					\begin{bmatrix}
						1 & \Delta\\
						0 &  1
				\end{bmatrix}}.$$
				\noindent We let $C_2(\Delta)$ be the corresponding matrix in the conjugacy class scheme.
				\item  Note that \label{type4}
				$ \left \langle\overline{\begin{bmatrix}
						\omega & 0 \\
						0 & \omega^{-1}
				\end{bmatrix}} \right \rangle = \left \langle\begin{bmatrix}
					\omega & 0 \\
					0 & \omega^{-1}
				\end{bmatrix} \right \rangle/\{ \pm I \}$ has order $\frac{q-1}{2}$. For any $x \in \mathbb{F}_q \setminus \{0, \pm 1\}$, we let 
				$$
				c_{3}(x)=\overline{\begin{bmatrix}
						x & 0 \\
						0 & x^{-1}
				\end{bmatrix}}.		
				$$
				For each $x \in \mathbb{F}_q \setminus \{0,\pm 1\}$, the matrices $c_3(x)$, $c_3(x^{-1})$, $c_3(-x)$ and $c_3(-x^{-1})$ are in the same conjugacy class. The next type of conjugacy class consists of matrices conjugate to 
				\begin{align*}
					c_3(\omega^i):= \overline{\begin{bmatrix}
							\omega^i & 0 \\
							0 & \omega^{-i}
					\end{bmatrix}},
				\end{align*}
				for $i \in  \left\{1,\ldots,\frac{q-5}{4}  \right\}$. There are $\frac{q-5}{4}$ conjugacy classes of this type.
				
				\noindent We let $C_3(\omega^i)$ be the matrix in the conjugacy class scheme corresponding to $c_3(\omega^i)$, for any $i\in \{1,\ldots,\frac{q-5}{4}\}$.
				\item The conjugacy class consisting of matrices conjugate to 
				
				\begin{align*}
					c_3\left(\sqrt{-1}\right)= c_3\left(\omega^{\tfrac{q-1}{4}}\right)= \overline{\begin{bmatrix}
							\sqrt{-1} & 0\\
							0 & -\sqrt{-1}
					\end{bmatrix}}.
				\end{align*}
					\noindent We let $C_3(\sqrt{-1})$ be the matrix in the conjugacy class scheme corresponding to $c_3(\sqrt{-1})$.				
				\item The field $\mathbb{F}_{q^2}=\mathbb{F}_q(\delta)$ can be embedded into the ring $\operatorname{M}_2(q)$ of all $2 \times 2$ matrices with entries in $\mathbb{F}_q$, via \label{type6}
				\begin{align*}
					\mathbb{F}_{q^2} &\to \operatorname{M}_2(q)\\
					a+\delta b&\mapsto
					\begin{bmatrix}
						a & b\Delta\\
						b & a\\ 
					\end{bmatrix}.
				\end{align*} 
				This ring homomorphism induces a group homomorphism from $\mathbb{F}_{q^2}^*$ to $\gl{2}{q}$. Consider the norm map $N: z \in \mathbb{F}_{q^2}^* = \left(\mathbb{F}_q(\delta)\right)\setminus \{0\} \mapsto N(z)=z^{q+1} \in \mathbb{F}_q^*$ and let $\operatorname{E}_q$ be its kernel. The last type of conjugacy classes of $\psl{2}{q}$ consists of all matrices conjugate to
				\begin{align*}
					c_4 (z) := 
					\overline{
						\begin{bmatrix}
							a & \Delta b\\
							b &  a
					\end{bmatrix}},
				\end{align*}
				where $z = a+\delta b \in \operatorname{E}_q \setminus \{ \pm 1 \}$. Note that, for any $z \in E_q \setminus \{\pm 1\}$, the matrices $c_4(z) ,\ c_4(z^{-1}) ,\ c_4(-z) ,\mbox{ and } c_4(-z^{-1})$ are in the same conjugacy class. There are $\frac{q-1}{4}$ conjugacy classes of this type.
				
				\noindent For any $z\in E_q \setminus \{\pm 1\}$, we denote by $C_4(z)$ the matrix in the conjugacy class scheme corresponding to $c_4(z)$.
			\end{enumerate}

			\subsection{When $q \equiv 3 \pmod{4}$}\label{subsec:q34}
			We also recall the conjugacy classes of $\psl{2}{q}$ (see \cite{adams2002character} for details). The assumption on $q$ implies that $\sqrt{-1} \notin \mathbb{F}_q$. Hence, we have $\mathbb{F}_{q^2}=\mathbb{F}_q(\sqrt{-1})$.  There are $\frac{q+5}{2}$ conjugacy classes of $\psl{2}{q}$ in this case.
			\begin{enumerate}[(U1)]
				\item The conjugacy class consisting only of the identity matrix $\{\overline{I}\}$.
				\item The conjugacy class consisting of all matrices conjugate to
				$$
				c_2'(1)=\overline{
					\begin{bmatrix}
						1 & 1 \\
						0 & 1
					\end{bmatrix}
				}.$$
				\noindent We let $C_2^\prime(1)$ be the matrix in the conjugacy class scheme corresponding to $c_2^\prime(1)$.
				\item The conjugacy class consisting of all matrices conjugate to
				$$
				c'_2(-1)=
				\overline{
					\begin{bmatrix}
						1 & -1\\
						0 & 1
					\end{bmatrix}
				}.
				$$
					\noindent We let $C_2^\prime(-1)$ be the matrix in the conjugacy class scheme corresponding to $c_2^\prime(-1)$.
				\item
				Recall that  $\overline{\begin{bmatrix}
						\omega &0 \\
						0 & \omega^{-1}
				\end{bmatrix}}$ has order $\frac{q-1}{2}$.  For any $x \in \mathbb{F}_q \setminus \{0, \pm 1\}$, we let 
				$$
				c'_{3}(x)=\overline{\begin{bmatrix}
						x & 0 \\
						0 & x^{-1}
				\end{bmatrix}}.		
				$$
				We note that, for $x \in \mathbb{F}_q \setminus \{0, \pm 1\}$, the matrices $c'_3(x)$, $c'_3(x^{-1})$, $c'_3(-x)$, and $c'_3(-x^{-1})$ are in the same conjugacy class. The next type of conjugacy class consists of matrices conjugate to 
				\begin{align*}
					c'_3(\omega^i):= \overline{\begin{bmatrix}
							\omega^i & 0 \\
							0 & \omega^{-i}
					\end{bmatrix}},
				\end{align*}
				for $i \in  \left\{1,\ldots,\frac{q-3}{4}  \right\}$. There are $\frac{q-3}{4}$ conjugacy classes of this type. 	\noindent We let $C_3^\prime(\omega^i)$ be the matrix in the conjugacy class scheme corresponding to $c_3^\prime(\omega^i)$, for any $i\in \{1,\ldots,\frac{q-3}{4}\}$.
				\item As before consider the injective group homomorphism
				\begin{align}
					\begin{split}
						\mathbb{F}_{q^2}^* &\to \gl{2}{q}\\
						a+ b\sqrt{-1}&\mapsto
						\begin{bmatrix}
							a & -b\\
							b & a\\ 
						\end{bmatrix}.
					\end{split}\label{norm-map} 
				\end{align}
				Also, consider the norm map $N: z \in \mathbb{F}_{q^2}^* = \left(\mathbb{F}_q(\sqrt{-1})\right)\setminus \{0\} \mapsto N(z)=z^{q+1} \in \mathbb{F}_q^*$. Let $\operatorname{E}_q$ be the kernel of $N$. The last type of conjugacy classes of $\psl{2}{q}$ consists of all matrices conjugate to
				\begin{align*}
					c'_4 (z) := 
					\overline{
						\begin{bmatrix}
							a & - b\\
							b &  a
					\end{bmatrix}},
				\end{align*}
				where $z = a+ b \sqrt{-1} \in \operatorname{E}_q \setminus \{ \pm 1 \}$. Note that, for any $z \in E_q \setminus \{\pm 1\}$, the matrices $c_4^\prime(z) ,\ c_4^\prime(z^{-1}) ,\ c_4^\prime(-z) ,\mbox{ and } c_4^\prime(-z^{-1})$ are again in the same conjugacy class. There are ${\frac{q+1}{4}}$ conjugacy classes of this type. 	\noindent We let $C_4^\prime(z)$ be the matrix in the conjugacy class scheme corresponding to $c_4^\prime(z)$, for any $z \in E_q\setminus\{\pm 1\}$.
			\end{enumerate}

			\section{Representation theory of $\psl{2}{q}$}\label{sect:rep}
			
			In this section, we give the necessary results about the representation theory of $\psl{2}{q}$ that are needed for this work. 
			
			\subsection{Basic representation theory} For this part, we refer to \cite{fulton2013representation} for more details.
			Now let $G$ be a group. A \emph{(complex) representation} of $G$ is a group homomorphism $\mathfrak{X}: G \to \gl{n}{\mathbb{C}}$, where $n \geq 1$ is a positive integer, called the \emph{dimension} of $\mathfrak{X}$. To any such $\mathfrak{X}$, we can associate another map $\chi: g \in G \mapsto \operatorname{Trace}(\mathfrak{X}(g)) \in \mathbb{C}$, called the \emph{character} of $G$ afforded by $\mathfrak{X}$  of {\it degree} $n$. Moreover, one obtains a module action of the group algebra $\mathbb{C}G$ on the vector space $\mathbb{C}^n$ via $g.v = \mathfrak{X}(g)v$, for any $g\in G$ and $v \in \mathbb{C}^n$. Consequently, $\mathbb{C}^n$ is a $\mathbb{C}G$-module. Conversely,  any $\mathbb{C}G$-module $V$ of finite dimension over $\mathbb{C}$ gives rise to a representation of $G$, by writing the action of $g \in G$ as a matrix in a fixed basis of $V$. We say that $\mathfrak{X}$ is \emph{irreducible} if its corresponding $\mathbb{C}G$-module $\mathbb{C}^n$ is irreducible. In this case, the character $\chi$ afforded by $\mathfrak{X}$ is called \emph{irreducible}. Throughout the paper, we denote by $\operatorname{Irr}(G)$ the complete set of all irreducible characters of $G$. It is well-known that any character of $G$ is a sum of some irreducible characters. In other words, for any character $\psi$ of $G$, there exists $\left(m_\chi\right)_{\chi \in \operatorname{Irr}(G)} \subset \mathbb{N}$ such that
			\begin{align}
				\psi = \sum_{\chi \in \operatorname{Irr}(G)} m_\chi\chi.\label{eq:char-decomp}
			\end{align}
		 For any $\chi \in \operatorname{Irr}(G)$, the integer $m_\chi$ is called the {\it multiplicity} of $\chi$ in $\psi$. If $m_\chi >0$, then $\chi$ is called a \emph{constituent} of $\psi$. The space of all class functions \footnote{i.e., complex-valued maps that are constant on any conjugacy class} of a group is endowed with the natural inner product defined by
			\begin{align}
				\langle \psi,\phi \rangle_G = \frac{1}{|G|} \sum_{g\in G} \psi(g) \overline{\phi(g)},
			\end{align}
			for any two class functions $\psi$ and $\phi$. The multiplicity of an irreducible character is easily computed using this inner product. In fact, we have $m_\chi =  \langle \psi, \chi \rangle$ in \eqref{eq:char-decomp}, for any $\chi \in \operatorname{Irr}(G)$. Moreover $\psi$ is irreducible if and only if $\langle  \psi ,\chi \rangle = \delta_{\psi,\chi}$, for any $\chi \in \operatorname{Irr}(G)$.
			
			Assume further now that $H$ is a subgroup of $G$. The action of $G$ on $\Omega = G/H$ yields a group homomorphism $G \to \sym(\Omega)$, which naturally extends to  a representation $G \rightarrow \gl{|\Omega|}{\mathbb{C}}$ of dimension $|\Omega|$. The latter associate any $g\in G$ to the permutation matrix $P_g \in \gl{|\Omega|}{\mathbb{C}}$, whose $(xH,yH)$-entry is equal to $1$ if $gxH = yH$, and $0$ otherwise. The character $\mathbf{fix}_H^G$ corresponding to this representation is called the \itbf{permutation character} of $G$ in its action on $\Omega = G/H$. 
			It is worth noting that $\mathbf{fix}_H^G(g)$ is equal to the number of cosets of $H$ fixed by $g$, for any $g\in G$.  
			This character  encodes some combinatorial invariants associated to $G$. For instance, if $\chi_0$ denotes the trivial character of $G$ \footnote{i.e., the character equal to the trivial homomorphism from $G$ to $\mathbb{C}^\times$.}, then
			\begin{align}
				\langle \mathbf{fix}_H^G,\chi_0 \rangle_G = \frac{1}{|G|} \sum_{g\in G} \mathbf{fix}_H^G(g),
			\end{align}
			which, by the orbit-counting lemma, is the number of orbits of the action $G$ on $\Omega$. 

			\subsection{The character tables of $\psl{2}{q}$}\label{sec:charpsl22}
			As in the previous section, the irreducible characters of $\psl{2}{q}$ depend on the congruence of $q$ modulo $4$. Here we recall these irreducible characters, as given in \cite{adams2002character}.
			Also, we refer to \cite{adams2002character} for the notations. The irreducible characters are one of the following:
			\begin{itemize}
				\item $\rho^\prime(1)$.
				\item $\overline{\rho}(1)$.
				\item $\rho(\alpha)$, where $\alpha \in \operatorname{Irr}(\mathbb{F}_q^*)$  such that $\alpha^2 \neq 1$ and $\alpha(-1)=1$ (note that $\rho(\alpha) = \rho(\alpha^{-1})$).\\
				\item $\pi(\chi)$, where $\chi \in \operatorname{Irr}(\operatorname{E}_q)$ such that $\chi^2 \neq 1$,  $\chi(-1)=1$ and $\chi \neq \overline{\chi}$ (note that $\pi(\chi) \simeq \pi(\overline{\chi})$).
				\item $\omega_e^\pm $, if $q \equiv 1 \pmod 4$.
				\item $\omega_0^{\pm}$, if $q \equiv 3 \pmod 4$.
			\end{itemize}
			We give the character table of $\psl{2}{q}$, for $q \equiv 1 \pmod{4}$ and $q \equiv 3 \pmod 4$, in Table~\ref{tab:char-table} and Table~\ref{tab:q-cong2}, respectively. Here, $\zeta \in \operatorname{Irr}(\mathbb{F}_q^*)$ denotes the unique irreducible character of $\mathbb{F}_q^*$ such that $\zeta(\omega) = -1$.
			{\footnotesize
				\begin{table}[t]
					\begin{tabular}{|c|c|c|c|c|c|c|c|}
						\hline
						&Number&$1$&$1$ &$1$ &$\frac{q-5}{4}$&$1$&$\frac{q-1}{4}$\\
						\hline
						&Size&$1$&$\frac{q^2-1}{2}$& $\frac{q^2-1}{2}$ & $q(q+1)$& $\frac{q(q+1)}{2}$& $q(q-1)$ \\ \hline
						Character&Number& $\overline{\begin{bmatrix}
								1 &0\\
								0 & 1
						\end{bmatrix}}$ & $\overline{\begin{bmatrix}
								1 & 1\\
								0 & 1
						\end{bmatrix}}$ & $\overline{\begin{bmatrix}
								1 & \Delta \\
								0 & 1
						\end{bmatrix}}$& $\overline{\begin{bmatrix}
								x & 0 \\
								0 & x^{-1}
						\end{bmatrix}}$& $\overline{\begin{bmatrix}
								\sqrt{-1} &0 \\
								0 & \left(\sqrt{-1}\right)^{-1}
						\end{bmatrix}}$& $\overline{\begin{bmatrix}
								x & \Delta y\\
								y & x
						\end{bmatrix}}$ \\ \hline
						$\rho(\alpha)$ & $\frac{q-5}{4}$ & $(q+1)$ & $1$ & $1$&$\alpha(x)+\alpha(x^{-1})$ & $2\alpha(\sqrt{-1})$ & $0$ \\ \hline
						$\overline{\rho}(1)$ & $1$& $q$ & $0$ & $0$ & $1$ & $1$ &$-1$\\ \hline
						$\rho^\prime(1)$&$1$&$1$&$1$&$1$&$1$&$1$&$1$\\ \hline
						$\pi(\chi)$ & $\frac{q-1}{4}$ &$q-1$& $-1$ & $-1$ & $0$ & $0$ & $-\chi(z) - \chi(z^{-1})$ \\ \hline
						$\omega_e^{\pm}$ & $2$ & $\frac{q+1}{2}$ & $\omega_e^{\pm}(1,1)$ & $\omega_e^\pm (1,\Delta)$ & $\zeta(x)$ & $\zeta(\sqrt{-1})$ & $0$ \\ \hline
					\end{tabular}
					\caption{Character table of $\psl{2}{q}$, for $q\equiv 1 \pmod 4$.}\label{tab:char-table}
			\end{table}}
			
			{\footnotesize 
				\begin{table}[b]
					\begin{tabular}{|c|c|c|c|c|c|c|c|}
						\hline
						&Number&$1$&$1$ &$1$ &$\frac{q-3}{4}$&$1$&$\frac{q-3}{4}$\\
						\hline
						&Size&$1$&$\frac{q^2-1}{2}$& $\frac{q^2-1}{2}$ & $q(q+1)$& $\frac{q(q-1)}{2}$& $q(q-1)$ \\ \hline
						Character&Number& $\overline{\begin{bmatrix}
								1 &0\\
								0 & 1
						\end{bmatrix}}$ & $\overline{\begin{bmatrix}
								1 & 1\\
								0 & 1
						\end{bmatrix}}$ & $\overline{\begin{bmatrix}
								1 & -1 \\
								0 & 1
						\end{bmatrix}}$& $\overline{\begin{bmatrix}
								x & 0 \\
								0 & x^{-1}
						\end{bmatrix}}$& $\overline{\begin{bmatrix}
								0 &-1 \\
								1 & 0
						\end{bmatrix}}$& $\overline{\begin{bmatrix}
								x & - y\\
								y & x
						\end{bmatrix}}$ \\ \hline
						$\rho(\alpha)$ & $\frac{q-3}{4}$ & $(q+1)$ & $1$ & $1$&$\alpha(x)+\alpha(x^{-1})$ & $0$ & $0$ \\ \hline
						$\overline{\rho}(1)$ & $1$& $q$ & $0$ & $0$ & $1$ & $-1$ &$-1$\\ \hline
						$\rho^\prime(1)$&$1$&$1$&$1$&$1$&$1$&$1$&$1$\\ \hline
						$\pi(\chi)$ & $\frac{q-3}{4}$ &$q-1$& $-1$ & $-1$ & $0$ & $-2\chi(\sqrt{-1})$ & $-\chi(z) - \chi(z^{-1})$ \\ \hline
						$\omega_0^{\pm}$ & $2$ & $\frac{q-1}{2}$ & $\omega_0^{\pm}(1,1)$ & $\omega_0^\pm (1,-1)$ & $0$ & $-\chi_0(\sqrt{-1})$ & $-\chi_0(z)$ \\ \hline
					\end{tabular}
					\caption{Character table $\psl{2}{q}$, for $q\equiv 3 \pmod 4$.}\label{tab:q-cong2}
				\end{table}
			}

			\section{Proof of Theorem~\ref{thm:main} for $q\equiv 3 \pmod 4$}\label{sect:proof-3mod4}
			
		Let $q \equiv 3 \pmod{4}$. Recall that the kernel $\operatorname{E}_q \leq \mathbb{F}_{q^2}^*$ of the norm map (see \S\ref{subsec:q34}) is a cyclic group of order $q+1$. Further, $\operatorname{E}_q$ can be embedded into $\sln{2}{q}$ via the map given in \eqref{norm-map}, which induces an embedding of $E_q/\{\pm 1\}$ into $\psl{2}{q}$. Fix $\varepsilon \in \operatorname{E}_q$ such that $\operatorname{E}_q = \langle \varepsilon \rangle$, and let $A_\varepsilon \in \psl{2}{q}$ be the image of $\varepsilon\{\pm 1\}$ with respect to this embedding.
			
			Let $U_q := \left\langle A_\varepsilon \right\rangle$. By \cite[Theorem 2.1.(c)]{king2005subgroup},  there is a unique conjugacy class of cyclic groups of order $\frac{q+1}{2}$ in $\psl{2}{q}$. Hence, from \cite[Theorem 2.1(c), (g), (h)]{king2005subgroup}, $U_q$ is contained in a dihedral subgroup $V_q$ of order $q+1$. By \cite[Corollary 2.2(b)]{king2005subgroup}, when $q \neq 7$, the subgroup $V_q \leq \psl{2}{q}$ is maximal in $\psl{2}{q}$, and so $V_q$ is the normalizer of $U_q$ in $\psl{2}{q}$. It is not hard to verify that the latter also holds when $q=7$. 
				
			Consider the action of $\psl{2}{q}$ on cosets of $U_q$ by left multiplication. This action is transitive of degree $[\psl{2}{q}:U_q] = q(q-1)$. 
			For this action, an element of $\psl{2}{q}$ is a derangement if and only if it is not conjugate to an element of $U_q$, so its order does not divide $\frac{q+1}{2}$. Moreover, if $g\in \psl{2}{q}$ of order $d\mid \frac{q+1}{2}$, then by \cite[Theorem 2.1(c)]{king2005subgroup} there is a unique conjugacy class of cyclic subgroups of order $d$ in $\psl{2}{q}$, so $\langle g \rangle$ is conjugate to the subgroup of order $d$ of $U_q$. In other words, $g$ has a fixed point. Therefore, we obtain the following result.
			
			\begin{lem}\label{lem:charder}
				An element of $\psl{2}{q}$ in its action on cosets of $U_q$ is a derangement if and only if its order is not a divisor of $\frac{q+1}{2}$.
			\end{lem}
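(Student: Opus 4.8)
The plan is to reduce the statement to the elementary fact that $g \in \psl{2}{q}$ fixes a coset $yU_q$ if and only if $y^{-1}gy \in U_q$; equivalently, $g$ is a derangement precisely when $g$ is not conjugate to any element of $U_q$. With this reformulation in hand, the two implications split along very different levels of difficulty.

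First I would dispatch the easy direction (derangement $\Rightarrow$ order $\nmid \frac{q+1}{2}$) by its contrapositive. If $g$ is not a derangement, then $y^{-1}gy \in U_q$ for some $y$, and since $|U_q| = \frac{q+1}{2}$ with $U_q$ cyclic, the order of $y^{-1}gy$ divides $\frac{q+1}{2}$. As order is invariant under conjugation, $\operatorname{ord}(g) \mid \frac{q+1}{2}$, which is exactly what the contrapositive requires.

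The substance lies in the converse (order $\mid \frac{q+1}{2}$ $\Rightarrow$ not a derangement). Suppose $\operatorname{ord}(g) = d \mid \frac{q+1}{2}$. Because $U_q$ is cyclic, it contains a unique cyclic subgroup $K$ of order $d$, while $g$ generates a cyclic subgroup $\langle g \rangle$ of order $d$. The key input is King's classification: Theorem~2.1(c) of \cite{king2005subgroup} asserts that the cyclic subgroups of a fixed order $d \mid \frac{q+1}{2}$ form a single conjugacy class in $\psl{2}{q}$. Hence $\langle g\rangle$ and $K$ are conjugate, say $x^{-1}\langle g\rangle x = K \leq U_q$; then $x^{-1}gx$, being a generator of $K$, lies in $U_q$, so $g$ fixes the coset $xU_q$.

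The main obstacle — or rather the point demanding care — is ensuring King's uniqueness statement applies without ambiguity. Cyclic subgroups of $\psl{2}{q}$ arise from both the split torus (orders dividing $\frac{q-1}{2}$) and the non-split torus (orders dividing $\frac{q+1}{2}$), and a priori one might worry that a given order $d$ could occur in both families, yielding two conjugacy classes. This cannot happen here: since $\frac{q+1}{2}$ and $\frac{q-1}{2}$ are consecutive integers they are coprime, so no $d > 1$ dividing $\frac{q+1}{2}$ divides $\frac{q-1}{2}$. Thus every cyclic subgroup of order $d \mid \frac{q+1}{2}$ is forced into the non-split family, and King's single-conjugacy-class conclusion holds as needed, with the case $d = 1$ (the identity, which fixes every coset) being trivial.
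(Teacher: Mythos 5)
Your proposal is correct and follows essentially the same route as the paper: the paper likewise reformulates a derangement as an element not conjugate into $U_q$, gets the easy direction from $|U_q| = \frac{q+1}{2}$, and invokes \cite[Theorem~2.1(c)]{king2005subgroup} (a unique conjugacy class of cyclic subgroups of each order $d \mid \frac{q+1}{2}$) to conjugate $\langle g \rangle$ onto the unique subgroup of order $d$ in $U_q$. Your additional coprimality check that $\gcd\left(\frac{q-1}{2},\frac{q+1}{2}\right)=1$ rules out split-torus interference is a sensible but inessential refinement of the same argument.
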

			
			We also observe that $V_q$ is an intersecting set of size twice the one of $U_q$.
			
			\begin{prop}\label{prop:vqinter}
				The subgroup $V_q$ is intersecting in the action of $\psl{2}{q}$ on cosets of $U_q$.
			\end{prop}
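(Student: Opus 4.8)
The plan is to exploit the fact that $V_q$ is itself a subgroup of $\psl{2}{q}$. Recall that a set $\mathcal{F}$ is $U_q$-intersecting precisely when, for every $g,g' \in \mathcal{F}$, the element $g'^{-1}g$ is conjugate into $U_q$; equivalently, by Lemma~\ref{lem:charder}, when $g'^{-1}g$ is \emph{not} a derangement, i.e. has order dividing $\frac{q+1}{2}$. Since $V_q$ is closed under products and inverses, the set $\{g'^{-1}g : g,g' \in V_q\}$ coincides with $V_q$ itself. Therefore proving that $V_q$ is intersecting reduces to the single claim that every element of $V_q$ has order dividing $\frac{q+1}{2}$.

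Next I would classify the elements of $V_q$ according to the dihedral structure $V_q \cong \dih{q+1}$. The rotation subgroup has index $2$, hence order $\frac{q+1}{2}$, and contains $U_q$ (cyclic of order $\frac{q+1}{2}$), so it equals $U_q$; thus every rotation has order dividing $\frac{q+1}{2}$ and is a non-derangement by Lemma~\ref{lem:charder}. The remaining elements are the reflections, each of which is an involution, so it only remains to verify that $2 \mid \frac{q+1}{2}$.

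This parity check is where the congruence hypothesis enters, and it is the only genuinely arithmetic step: since $q \equiv 3 \pmod 4$ we have $q+1 \equiv 0 \pmod 4$, so $\frac{q+1}{2}$ is even and $2 \mid \frac{q+1}{2}$. Consequently the reflections, being of order $2$, also have order dividing $\frac{q+1}{2}$ and are non-derangements. Combining the two cases, every element of $V_q$ is a non-derangement, whence $V_q$ is $U_q$-intersecting. Rather than a genuine obstacle, the point requiring care is the translation of the intersecting condition into the statement that the differences $g'^{-1}g$ are non-derangements, together with the observation that this becomes trivial for a subgroup; the hypothesis $q \equiv 3 \pmod 4$ is used in an essential way only to force $\frac{q+1}{2}$ to be even, and this is precisely the step that fails when $q \equiv 1 \pmod 4$, where $\frac{q+1}{2}$ is odd and the involutions become derangements.
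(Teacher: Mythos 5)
Your proof is correct and follows essentially the same route as the paper: both reduce the claim to showing that $V_q$ is derangement-free, then use Lemma~\ref{lem:charder} together with the dihedral structure of $V_q$ and the fact that $q \equiv 3 \pmod 4$ forces $2 \mid \frac{q+1}{2}$, so that both the elements of the cyclic part and the involutions have order dividing $\frac{q+1}{2}$. The only cosmetic difference is that you derive the criterion ``a subgroup is intersecting if and only if it contains no derangement'' directly from the subgroup property, whereas the paper cites \cite{meagher180triangles} for it.
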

			\begin{proof}
				A subgroup is intersecting if and only if it is derangement-free, with respect to the action (see \cite{meagher180triangles}).  Let $g \in V_q$. Since $V_q$ is dihedral of order $q+1$ and $q \equiv 3 \pmod{4}$, the order of $g$ divides $\frac{q+1}{2}$. Hence, by Lemma \ref{lem:charder}, $g$ is not a derangement. This completes the proof. 						
			\end{proof}

			\begin{rmk}
				It is crucial to have $q \equiv 3 \pmod 4$ in Proposition \ref{prop:vqinter},  otherwise $2$ does not divide $\frac{q+1}{2}$, in which case $V_q$ is not intersecting.
			\end{rmk}
			
			Consequently, we immediately see that this action of $\psl{2}{q}$ does not have the EKR property. 
			
			\subsection{The proof of Theorem~\ref{thm:main}}
			 For the proof, it suffices to show that the largest intersecting set for the action of $\psl{2}{q}$  on cosets of $U_q$ is of size no more than $q+1$. Indeed, from Proposition \ref{prop:vqinter}, this bound is already attained by the intersecting subgroup $V_q$.

			 Now let $\Gamma_q$ be the derangement graph with respect to this action of $\psl{2}{q}$. Consider the following weighted adjacency matrix of $\Gamma_q$:
			\begin{align}
				\mathcal{A}_q = \frac{1}{q+1}\left(C_2^\prime(1)+C_2^\prime(-1)\right) + \frac{2}{q+1} \sum_{i = 1}^{\frac{q-3}{4}}C_3^\prime(\omega^i).\label{eq:adj1}
			\end{align}		
			It is worthwhile to note that  for a linear combination of the matrices $C_2^\prime(1),\ C_2^\prime(-1)$, and those in $\left\{C_3^\prime(\omega^i) \mid i \in \{1,2,\ldots,\frac{q-3}{4} \} \right\}$ to be symmetric, it is enough to impose that the weights on $C_2(1)$ and $C_2(-1)$ are equal, since $c_2(1)^{-1} = c_2(-1)$. Using Table~\ref{tab:q-cong2}, we deduce that the eigenvalues of \eqref{eq:adj1} are those in the next table.
			\begin{table}[H]
				\begin{tabular}{|c|c|c|c|c|c|}
					\hline
					&$\rho^\prime(1)$& $\overline{\rho}(1)$& $\rho(\alpha)$ & $\pi(\chi)$ & $\omega_0^\pm$\\
					\hline\hline
					Eigenvalue & $\frac{q(q-1)}{2}-1$ & $\frac{q-3}{2}$ & $-1$ & $-1$ & $\frac{-2}{q^2-1}$\\
					\hline
				\end{tabular}
				\caption{Eigenvalues of \eqref{eq:adj1}.}
			\end{table}
			Consequently, the Ratio bound (Theorem~\ref{lem:ratio-bound}) yields
			\begin{align*}
				\alpha(\Gamma_q) \leq \frac{|\psl{2}{q}|}{1-\frac{\frac{q(q-1)}{2}-1}{-1}} = q+1.
			\end{align*}
This completes the proof.

		\section{Proof of Theorem~\ref{thm2} for $q \equiv 1 \pmod 4$}\label{sect:thm-1mod4}

		Let $q \equiv 1 \pmod 4$.  For any $r \mid \frac{q-1}{2}$ odd, consider the action of $\psl{2}{q}$ on the subgroup $\mathbb{M}_r$  of Theorem \ref{thm:secon}. See Notations \ref{note:1e} below for an explicit description of $\mathbb{M}_r$.
		
		The proof given in this section is quite lengthy, so first we would like to give the steps of the proof. The main tool used in the proof is again Theorem~\ref{lem:ratio-bound}. To find a weighted adjacency matrix for the derangement graph of the action of $\psl{2}{q}$ on cosets of $\mathbb{M}_r$, one needs to identify the derangements. In Subsection~\ref{subs1}, we determine the derangements and the number of cosets fixed by non-derangements for the action in question. Then, we find the permutation character in Subsection~\ref{subs2} and the character values on conjugacy classes of derangements in Subsection~\ref{subs3}. Finally, in Subsection~\ref{subs4} we give a weighted adjacency matrix for which the upper bound in Theorem~\ref{lem:ratio-bound} yields the desired value.
		
			\subsection{Number of fixed points}\label{subs1}
			We will determine the number of fixed points for each conjugacy classes of $\psl{2}{q}$, in its action on $\mathbb{M}_r$.
			\begin{note}\label{note:1e}
					 Throughout this paper, besides the notations from \S\ref{sec:conjq14}, we will use the following additional ones.
					\begin{enumerate}[(N1)]
						\item Let
						$\mathcal{I}_q=\left\lbrace 1,\dots, \frac{q-5}{4}\right \rbrace$. Note that $\{ c_3(\omega^i) \mid i \in \mathcal{I}_q \}$ is a complete set of representatives of conjugacy classes of  type (T\ref{type4}). Also, we have $|\mathcal{I}_q| = \frac{q-5}{4}$.
						\item  Let $\mathcal{Z}_q \subset E_q \setminus \{\pm 1\}$ such that $\{c_4(z):\ z\in \mathcal{Z}_q \}$ is a complete set of representatives of conjugacy classes of type (T\ref{type6}). Note that $|\mathcal{Z}_q| = \frac{q-1}{4}$.
						\item Recall that the Borel subgroup $\mathbb{B}_q$ consists of all upper triangular matrices. Consider the subgroups of $\psl{2}{q}$:
						\begin{align*}
							H = \left\{ 
							\overline{				
								\begin{bmatrix}
									1 & x\\
									0 & 1
							\end{bmatrix}} \ :\ 
							x \in\mathbb{F}_q
							\right\} \mbox{ and }
							K = \left\langle A  \right\rangle,
						\end{align*}
						where $A = \overline{\begin{bmatrix}
								\omega & 0\\
								0 & \omega^{-1}
						\end{bmatrix}}$. Noticing that $A$ normalizes $H$, $|\langle A \rangle|=\frac{q-1}{2}$ and $H \cap A =\{\overline{I}\}$, we obtain  $\mathbb{B}_q=H \rtimes \langle A \rangle \simeq \mathbb{F}_q \rtimes \mathbb{Z}_{\frac{q-1}{2}}$.  For any odd $r \mid \frac{q-1}{2}$, we have $\mathbb{M}_r := \langle H,A^{r} \rangle = H \rtimes \langle A^{r} \rangle \simeq \mathbb{F}_q \rtimes \mathbb{Z}_{\frac{q-1}{2r}} $. It is easy to check that $[\mathbb{B}_q:\mathbb{M}_r]=r$.
					\end{enumerate}\label{not1}
			\end{note}
				
				\begin{lem}
					For $x \in \{1,\Delta\}$, the matrix $c_2(x)$ admits $r$ fixed cosets.\label{lem1}
				\end{lem}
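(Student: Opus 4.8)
The plan is to compute the permutation character value $\mathbf{fix}_{\mathbb{M}_r}^{\psl{2}{q}}(c_2(x))$ directly from the orbit-counting formula for fixed cosets. Recall that for $H\leq G$ and $s\in G$, a coset $gH$ is fixed by left multiplication by $s$ exactly when $g^{-1}sg\in H$; grouping the elements $g$ by the conjugate $h=g^{-1}sg$ they produce (each such $h$ being attained by exactly $|C_G(s)|$ elements $g$, since $\{g:g^{-1}sg=h\}$ is a right coset of $C_G(s)$) gives
\begin{align*}
\mathbf{fix}_H^G(s) = \frac{|C_G(s)|\cdot |H \cap s^{G}|}{|H|},
\end{align*}
where $s^G$ is the $G$-conjugacy class of $s$. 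I would apply this with $G=\psl{2}{q}$, $H=\mathbb{M}_r$, and $s=c_2(x)$ for $x\in\{1,\Delta\}$.

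First I would read the class size of $c_2(x)$ from Table~\ref{tab:char-table}: both unipotent classes have size $\frac{q^2-1}{2}$, whence $|C_{\psl{2}{q}}(c_2(x))| = \frac{|\psl{2}{q}|}{(q^2-1)/2} = q$. It then remains to compute $|\mathbb{M}_r\cap c_2(x)^{\psl{2}{q}}|$, the number of elements of $\mathbb{M}_r$ conjugate to $c_2(x)$. Writing a general element of $\mathbb{M}_r=H\rtimes\langle A^r\rangle$ as $\overline{\begin{bmatrix} \omega^{rj} & \ast \\ 0 & \omega^{-rj}\end{bmatrix}}$ with $0\leq j<\frac{q-1}{2r}$, its eigenvalues are $\omega^{\pm rj}$. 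Since $0\leq rj<\frac{q-1}{2}$, such an element is unipotent (hence possibly conjugate to some $c_2(\cdot)$) only when $j=0$, i.e. only for the $q-1$ nontrivial elements of $H$.

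Next I would split these nontrivial elements $\overline{\begin{bmatrix} 1 & t \\ 0 & 1\end{bmatrix}}$, $t\in\mathbb{F}_q^*$, into the two unipotent classes. In $\sln{2}{q}$ two such matrices with parameters $t,t'$ are conjugate iff $t/t'$ is a square, and because $q\equiv 1\pmod 4$ the scalar $-1=\left(\omega^{(q-1)/4}\right)^2$ is itself a square, so passing to $\psl{2}{q}$ does not merge the two classes. Hence the class of $c_2(1)$ meets $H$ in the $\frac{q-1}{2}$ elements with $t$ a square and the class of $c_2(\Delta)$ in the $\frac{q-1}{2}$ elements with $t$ a non-square; in either case $|\mathbb{M}_r\cap c_2(x)^{\psl{2}{q}}|=\frac{q-1}{2}$. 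Substituting this, $|C_{\psl{2}{q}}(c_2(x))|=q$, and $|\mathbb{M}_r|=\frac{q(q-1)}{2r}$ into the formula yields
\begin{align*}
\mathbf{fix}_{\mathbb{M}_r}^{\psl{2}{q}}(c_2(x)) = \frac{q\cdot \frac{q-1}{2}}{\frac{q(q-1)}{2r}} = r,
\end{align*}
as claimed.

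The main obstacle is the identification step: pinning down which elements of $\mathbb{M}_r$ are $\psl{2}{q}$-conjugate to $c_2(x)$. Two points need care — that elements with nontrivial semisimple part ($j\neq 0$) are genuinely not unipotent, which the eigenvalue computation together with the bound $rj<\frac{q-1}{2}$ settles, and that the two upper-triangular unipotent square-classes stay distinct in $\psl{2}{q}$, which is precisely where the hypothesis $q\equiv 1\pmod 4$ enters through $-1$ being a square. Once these are established, the remainder is a routine substitution.
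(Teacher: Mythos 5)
Your proof is correct, but it takes a genuinely different route from the paper's. The paper argues directly: conjugating $c_2(x)$ by a generic $g=\overline{\begin{bmatrix}a&b\\c&d\end{bmatrix}}$ gives $\overline{\begin{bmatrix}1+cdx & d^2x\\ -c^2x & 1-cdx\end{bmatrix}}$, which lies in $\mathbb{M}_r$ (whose elements are upper triangular) if and only if $c=0$, i.e.\ if and only if $g\in\mathbb{B}_q$; hence the fixed cosets are precisely the cosets of $\mathbb{M}_r$ contained in $\mathbb{B}_q$, and there are $[\mathbb{B}_q:\mathbb{M}_r]=r$ of them. You instead evaluate the permutation character via the class-fusion formula $\mathbf{fix}_H^G(s)=|C_G(s)|\,|H\cap s^G|/|H|$, reading the centralizer order off Table~\ref{tab:char-table} and counting unipotent elements of $\mathbb{M}_r$ in each square class; your derivation of the formula, the eigenvalue argument excluding $j\neq 0$ (where the bound $rj<\frac{q-1}{2}$ correctly rules out a lift with eigenvalue $-1$), and the final arithmetic all check out. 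The paper's computation is shorter and, as a byproduct, locates the fixed cosets explicitly inside $\mathbb{B}_q$ (a description reused in the counting argument of Lemma~\ref{lem:c3fixss}); your method is heavier but more systematic, being exactly the computation one would apply uniformly to the other conjugacy classes. One quibble: your assertion that the two unipotent $\sln{2}{q}$-classes fail to merge in $\psl{2}{q}$ \emph{because} $q\equiv 1\pmod 4$ makes $-1$ a square is misdirected. Non-merging holds for every odd $q$: a matrix of trace $2$ cannot be $\sln{2}{q}$-conjugate to one of trace $-2$, so the sign ambiguity in $\psl{2}{q}=\sln{2}{q}/\{\pm I\}$ never identifies the two classes. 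The congruence $q\equiv 1 \pmod 4$ only governs the choice of representatives ($c_2(1),c_2(\Delta)$ versus $c_2'(1),c_2'(-1)$ in \S\ref{subsec:q34}), not the count $|H\cap c_2(x)^{\psl{2}{q}}|=\frac{q-1}{2}$ you need, so this slip does not affect the validity of your argument.
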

				\begin{proof}
					Let $x \in \{1,\Delta\}$.
					For any $g=\overline{\begin{bmatrix}
							a & b\\
							c & d
					\end{bmatrix}} \in \psl{2}{q}$, we have
					$c_2(x)\cdot g\mathbb{M}_r=g\mathbb{M}_r$, if and only if,
					\begin{align*} g^{-1}c_{2}(x) g=
						\overline{\begin{bmatrix}
								a & b\\
								c & d
						\end{bmatrix}}^{-1}
						\overline{\begin{bmatrix}
								1 & x\\
								0 & 1
						\end{bmatrix}}
						\overline{\begin{bmatrix}
								a & b\\
								c&d
						\end{bmatrix}}
						= 
						\overline{\begin{bmatrix}
								1+cd x & d^2 x\\
								-c^2 x & 1 - cd x
						\end{bmatrix}} \in \mathbb{M}_r,
					\end{align*}
					if and only if, $c=0$; or in other words, $g\mathbb{M}_r \subset \mathbb{B}_q$.
					Since $[\mathbb{B}_q:\mathbb{M}_r]=r$, we deduce that $c_2(x)$ has exactly $r$ fixed cosets, contained in $\mathbb{B}_q$,  which completes the proof.
				\end{proof}
				\begin{lem}\label{lem:c3fixss}
					Let $ i\in \left\{1,2,\ldots,\frac{q-1}{4} \right\}=\mathcal{I}_q \cup \{ \frac{q-1}{4}\}$. If $i \equiv 0\pmod{r}$, then $c_3(\omega^i)$ fixes exactly $2r$ cosets. Moreover, if  $i \not\equiv 0 \pmod {r}$, then $c_3(\omega^i)$ is a derangement.
				\end{lem}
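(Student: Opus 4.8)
The plan is to follow the same conjugation computation as in Lemma~\ref{lem1}, the only new feature being that a split semisimple element fixes two points of $\pg{1}{q}$ (the two eigenlines) rather than one, which is what will ultimately produce the count $2r$ instead of $r$. Fix $i\in\{1,\dots,\frac{q-1}{4}\}$; note that $\omega^{2i}\neq 1$ throughout this range, so $\omega^i\neq\omega^{-i}$. For a representative $g=\overline{\begin{bmatrix} a & b \\ c & d\end{bmatrix}}\in\psl{2}{q}$, I would first compute
\begin{align*}
g^{-1}c_3(\omega^i)g=\overline{\begin{bmatrix} \omega^i ad-\omega^{-i}bc & bd(\omega^i-\omega^{-i}) \\ -ac(\omega^i-\omega^{-i}) & \omega^{-i}ad-\omega^i bc\end{bmatrix}}.
\end{align*}
Since $g\mathbb{M}_r$ is fixed by $c_3(\omega^i)$ exactly when $g^{-1}c_3(\omega^i)g\in\mathbb{M}_r\subseteq\mathbb{B}_q$, membership already forces the lower-left entry to vanish; as $\omega^i\neq\omega^{-i}$ this gives $ac=0$, i.e.\ $c=0$ or $a=0$.

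Next I would analyse the two branches. If $c=0$ then $g\in\mathbb{B}_q$ and, using $ad=1$, the conjugate is upper triangular with top-left entry $\omega^i$; if $a=0$ then $g\in w\mathbb{B}_q$ with $w=\overline{\begin{bmatrix} 0 & -1 \\ 1 & 0\end{bmatrix}}$ and, using $-bc=1$, the conjugate is upper triangular with top-left entry $\omega^{-i}$. In either branch the resulting upper-triangular matrix lies in $\mathbb{M}_r$ precisely when its top-left entry lies in $\langle\omega^r\rangle$ modulo $\{\pm1\}$. Here the key arithmetic observation is that $-1=\omega^{\frac{q-1}{2}}\in\langle\omega^r\rangle$, because $r\mid\frac{q-1}{2}$; hence $\omega^{\pm i}\in\pm\langle\omega^r\rangle$ if and only if $r\mid i$. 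This yields the dichotomy at once: if $r\nmid i$ no coset is fixed, so $c_3(\omega^i)$ is a derangement, while if $r\mid i$ then every $g\in\mathbb{B}_q\cup w\mathbb{B}_q$ yields a fixed coset.

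It then remains to count the fixed cosets when $r\mid i$. The branch $c=0$ gives the cosets $b\mathbb{M}_r$ with $b\in\mathbb{B}_q$, and there are exactly $[\mathbb{B}_q:\mathbb{M}_r]=r$ of them; the branch $a=0$ gives the $r$ cosets $wb\mathbb{M}_r$ lying inside $w\mathbb{B}_q$. Since $w\notin\mathbb{B}_q$, the $\mathbb{B}_q$-cosets $\mathbb{B}_q$ and $w\mathbb{B}_q$ are disjoint, so these two families of $\mathbb{M}_r$-cosets are disjoint, giving the claimed total of $2r$ fixed cosets.

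The step needing the most care is this final counting together with the sign bookkeeping in $\psl{2}{q}$: one must confirm that both branches genuinely occur (they correspond to the two fixed points of $c_3(\omega^i)$ on $\pg{1}{q}$) and contribute disjoint cosets, and that the membership condition collapses to $r\mid i$ only after invoking $-1\in\langle\omega^r\rangle$. As a consistency check I would verify the special value $i=\frac{q-1}{4}$, i.e.\ the class $C_3(\sqrt{-1})$: although $\omega^i=\sqrt{-1}$ has the larger centralizer of order $q-1$ in $\psl{2}{q}$, the hypotheses force $r\mid\frac{q-1}{4}$ (as $r$ is odd and $r\mid\frac{q-1}{2}$), so the formula correctly returns $2r$, in agreement with the count $|C_G(c_3(\omega^i))|\,|C\cap\mathbb{M}_r|/|\mathbb{M}_r|$.
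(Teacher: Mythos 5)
Your proposal is correct and follows essentially the same route as the paper: the same conjugation computation reducing fixation to $ac=0$, the same use of $-1=\omega^{\frac{q-1}{2}}\in\langle\omega^r\rangle$ to collapse the membership condition to $r\mid i$, and the same $r+r$ count of fixed cosets. The only cosmetic difference is that you organize the second branch as the $r$ cosets of $\mathbb{M}_r$ inside $w\mathbb{B}_q$, whereas the paper counts representatives $\overline{\begin{bmatrix} 0 & b\\ -b^{-1} & d\end{bmatrix}}$ directly by checking when two of them define the same coset; these are the same argument.
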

				\begin{proof}
					For any $g=\overline{\begin{bmatrix}
							a & b \\
							c & d
					\end{bmatrix}} \in \psl{2}{q}$, we have $c_3(\omega^i) \cdot g \mathbb{M}_r=g\mathbb{M}_r$, if and only if,
					\begin{align}\label{eq:belongMr}
						g^{-1}c_{3}(\omega^i)g=
						\overline{\begin{bmatrix}
								a & b\\
								c & d
						\end{bmatrix}}^{-1}
						\overline{\begin{bmatrix}
								\omega^i & 0\\
								0 & \omega^{-i}
						\end{bmatrix}} 
						\overline{\begin{bmatrix}
								a & b \\
								c & d
						\end{bmatrix}}
						= 
						\overline{\begin{bmatrix}
								ad\omega^i - bc \omega^{-i} & bd ( \omega^i - \omega^{-i})\\
								ac\left( \omega^{-i} - \omega^i\right) & ad \omega^{-i} - bc \omega^i
						\end{bmatrix}} \in \mathbb{M}_r,
					\end{align}
					if and only if, $ac (\omega^{-i}-\omega^i) = 0$, that is, $ac=0$.
					
					Assume $a = 0$.  The assertion \eqref{eq:belongMr}, which becomes
					\begin{align*}
						g^{-1}c_{3}(\omega^i)g
						= 
						\overline{\begin{bmatrix}
								- bc \omega^{-i} & bd ( \omega^i - \omega^{-i})\\
								0 & - bc \omega^i
						\end{bmatrix}}
						= 
						\overline{\begin{bmatrix}
								-\omega^{-i} & -bd (\omega^i - \omega^{-i})\\
								0 & -\omega^i	
						\end{bmatrix}} \in \mathbb{M}_r,
					\end{align*}
					holds,
					if and only if, $\pm \omega^i \in \langle \omega^r\rangle$, if and only if, {$i \equiv 0 \pmod{r}$ or $i+\frac{q-1}{2} \equiv 0 \pmod{r}$, if and only if, $i \equiv 0 \pmod{r}$ (since $r \mid \frac{q-1}{2}$}).
					Similarly, when $c = 0$, then \eqref{eq:belongMr} is also equivalent to $i \equiv 0\pmod{r}$. This proves the second part of the theorem.
					
					Next, assume that $i \equiv 0 \pmod r$. Let us prove that $c_3(\omega^i)$ fixes exactly $2r$ cosets of $\mathbb{M}_r$. As we saw in the above paragraph,  $c_3(\omega^i)$ fixes a coset $g\mathbb{M}_r$ if and only if $g$ is a matrix of the form
					\begin{align}
						\overline{\begin{bmatrix}
								a & b\\
								0 & a^{-1}
						\end{bmatrix}}
						\mbox{ or }
						\overline{\begin{bmatrix}
								0 & b\\
								-b^{-1} & d
						\end{bmatrix}}.
					\label{eq:matrices}
					\end{align}
										
					Cosets $g\mathbb{M}_r$ of the first type are  exactly all cosets of $\mathbb{M}_r$ in $\mathbb{B}_q$. Hence, there are $r$ of  them. Now let us count cosets $g\mathbb{M}_r$ of the second type, which are clearly not of the first type. For $b,\beta\in \mathbb{F}_q^*$ and $d,\gamma\in \mathbb{F}_q$, we have
					\begin{align*}
						\overline{\begin{bmatrix}
								0 & b\\
								-b^{-1} & d
						\end{bmatrix}}^{-1}
						\overline{\begin{bmatrix}
								0& \beta\\
								-\beta^{-1} & \gamma 
						\end{bmatrix}}
						=
						\overline{\begin{bmatrix}
								b\beta^{-1}  & d\beta-b\gamma\\
								0 & b^{-1}\beta
						\end{bmatrix}} \in \mathbb{M}_r \Leftrightarrow b^{-1}\beta \in \langle \omega^{r} \rangle.
					\end{align*}
					Consequently, there are exactly $r$ cosets of the second type. We conclude that $c_3(\omega^i)$ fixes exactly $2r$ cosets. This completes the proof.
				\end{proof}
				\begin{lem}\label{lem:c4isder}
					For all  $z \in \operatorname{E}_q \setminus \{\pm 1\}$, the matrix $c_4(z)$ is a derangement.
				\end{lem}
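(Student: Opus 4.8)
The plan is to use the standing description of derangements (an element of $\psl{2}{q}$ is a derangement for the action on cosets of $\mathbb{M}_r$ exactly when it is not conjugate to any element of $\mathbb{M}_r$) and to separate $c_4(z)$ from $\mathbb{M}_r$ by an eigenvalue argument. Since $\mathbb{M}_r\leq\mathbb{B}_q$ consists solely of upper triangular matrices, every element of $\mathbb{M}_r$ is \emph{split} (its eigenvalues lie in $\mathbb{F}_q$), whereas I will show that $c_4(z)$ is not, and this alone forbids conjugacy.

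First I would record, using Notations~\ref{note:1e}, that each element of $\mathbb{M}_r$ has the form $\overline{\begin{bmatrix}\omega^{rk} & s\\ 0 & \omega^{-rk}\end{bmatrix}}$, so its two eigenvalues $\omega^{\pm rk}$ lie in $\mathbb{F}_q$. Next I would compute the eigenvalues of $c_4(z)$. For $z=a+\delta b\in\operatorname{E}_q\setminus\{\pm1\}$, the matrix $\begin{bmatrix}a & \Delta b\\ b & a\end{bmatrix}$ has characteristic polynomial $\lambda^2-2a\lambda+(a^2-\Delta b^2)=\lambda^2-2a\lambda+N(z)=\lambda^2-2a\lambda+1$, whose roots are $a+\delta b=z$ and $a-\delta b=z^q=z^{-1}$. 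Because $z\in\operatorname{E}_q\cap\mathbb{F}_q$ would force $z^2=z^{q+1}=1$, the hypothesis $z\neq\pm1$ guarantees $z\notin\mathbb{F}_q$; hence both eigenvalues of $c_4(z)$ lie in $\mathbb{F}_{q^2}\setminus\mathbb{F}_q$.

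I would then conclude as follows. Conjugation in $\gl{2}{q}$, and therefore in $\psl{2}{q}$, preserves the multiset of eigenvalues, and the only additional ambiguity in $\psl{2}{q}$ is the replacement of a representative $M$ by $-M$, which merely negates the eigenvalues and so cannot move them between $\mathbb{F}_q$ and $\mathbb{F}_{q^2}\setminus\mathbb{F}_q$. Consequently no conjugate of $c_4(z)$ can be split, while every element of $\mathbb{M}_r$ is split; thus $c_4(z)$ is not conjugate to any element of $\mathbb{M}_r$ and is a derangement.

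I do not anticipate a genuine obstacle: the argument is essentially the observation that a non-split semisimple element cannot be conjugated into the upper-triangular subgroup $\mathbb{B}_q$. The only points requiring care are the harmless $\pm I$ identification in $\psl{2}{q}$ and the fact that $z\neq\pm1$ forces $z\notin\mathbb{F}_q$. Should a direct computation in the style of Lemma~\ref{lem:c3fixss} be preferred instead, one would take a general $g=\overline{\begin{bmatrix}a&b\\c&d\end{bmatrix}}$, expand $g^{-1}c_4(z)g$, and verify that its lower-left entry never vanishes, which is precisely the statement that $c_4(z)$ admits no eigenvector over $\mathbb{F}_q$.
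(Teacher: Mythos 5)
Your proposal is correct and follows essentially the same route as the paper: both arguments rest on the observation that conjugation preserves the characteristic polynomial of a lift, that every element of $\mathbb{M}_r$ is upper triangular and hence has a reducible (split) characteristic polynomial, and that $c_4(z)$ does not. The only cosmetic difference is that the paper certifies irreducibility via the non-square discriminant $\Delta b^2$, while you exhibit the roots $z, z^{-1}\in\mathbb{F}_{q^2}\setminus\mathbb{F}_q$ directly (and your explicit handling of the $\pm I$ ambiguity is a slightly more careful rendering of the same point).
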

				\begin{proof}
					Let $z=a+b \delta \in \operatorname{E}_q \setminus \{\pm 1\}$ with $a, b \in \mathbb{F}_q$. Note that the characteristic polynomial of any lift of the matrix $c_4(z)$ in ${\rm SL}_2(q)$, has discriminant $\Delta b^2$, which is not a square in $\mathbb{F}_q$; and so it is irreducible over $\mathbb{F}_q$.  Now let $g\in \psl{2}{q}$. We claim that $g^{-1}c_4(z) g$ does not belong to $\mathbb{M}_r$; in other words  $c_4(z)\cdot g \mathbb{M}_r \neq g \mathbb{M}_r$. Indeed, matrices in $\mathbb{M}_r$ are upper triangular, and so their lifts in ${\rm SL}_2(q)$ have reducible characteristic polynomials over $\mathbb{F}_q$. This completes the proof. 
			\end{proof}
			\subsection{The permutation character}\label{subs2}
			
			We consider again the notations for irreducible characters of $\psl{2}{q}$ from \S\ref{sec:charpsl22}. Let us determine the permutation character of $\psl{2}{q}$ in its action on cosets of $\mathbb{M}_r$. Let $\xi$ be a primitive $(q-1)$-root of unity. For any $i\in \{1,\ldots,q-1\}$, define $\alpha_i: \mathbb{F}_q^* \to \mathbb{C}$ such that $\alpha_i(\omega^j) = \xi^{ij}$, for all $j \pmod{q-1}$. Clearly, we have $\operatorname{Irr}(\mathbb{F}_q^*) = \left\{ \alpha_i : i \in \{1,\ldots,q-1\} \right\}$. 
			
			It is worth noting that, for any {$i\in \{1,\ldots, r-1\}$}, the restriction of $\alpha_{\frac{q-1}{r}i}$ to the subgroup $\langle \omega^r \rangle$ coincides with the trivial character of the latter. Moreover, for any distinct $i, j \in \{1,\ldots,r-1 \}$, we have $$\rho \left(\alpha_{\frac{q-1}{r}i}\right) = \rho\left(\alpha_{\frac{q-1}{r}j}\right)$$ if and only if $j\equiv -i \pmod{r}$. Consequently, there are $\frac{r-1}{2}$ distinct non-trivial characters of the form $\rho(\alpha_j) \in \operatorname{Irr}(\psl{2}{q})$ ($j \equiv 0\pmod {\tfrac{q-1}{r}}$).
			
			In the next lemma, we determine the decomposition of the permutation character of the action of $\psl{2}{q}$ on cosets of $\mathbb{M}_r.$ This result is not entirely essential to the proof of Theorem~\ref{thm2}; it rather tells us from which irreducible characters to expect the smallest eigenvalue of a suitable weighted adjacency matrix to come from. In fact, this will be the case as we will see in Corollary~\ref{cor}. Nevertheless, we give the proof of this result here for completeness. 
			\begin{lem}
				If $\Psi$ is the permutation character of $\psl{2}{q}$, in its action on $\mathbb{M}_r$, then
				\begin{align}
					\Psi = \rho^{\prime}(1) + \overline{\rho}(1) +  2\sum_{j = 1}^{\tfrac{r-1}{2}} \rho\left(\alpha_{\frac{q-1}{r}j}\right).\label{eq:perm-char}
				\end{align}
			\end{lem}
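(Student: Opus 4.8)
The plan is to compute the permutation character $\Psi = \mathbf{fix}_{\mathbb{M}_r}^{\psl{2}{q}}$ directly from its values on conjugacy classes, using the fixed-point counts already established in Subsection~\ref{subs1}, and then to extract the multiplicities $m_\chi = \langle \Psi, \chi\rangle$ against each irreducible character $\chi$ via the character table (Table~\ref{tab:char-table}). Recall that for any $g \in \psl{2}{q}$, $\Psi(g)$ equals the number of cosets of $\mathbb{M}_r$ fixed by $g$. So the first step is to tabulate $\Psi$ on representatives of all conjugacy classes: $\Psi(\overline{I}) = [\psl{2}{q}:\mathbb{M}_r]$, while Lemma~\ref{lem1} gives $\Psi(c_2(x)) = r$ for $x \in \{1,\Delta\}$, Lemma~\ref{lem:c3fixss} gives $\Psi(c_3(\omega^i)) = 2r$ when $r \mid i$ and $0$ otherwise, and Lemma~\ref{lem:c4isder} gives $\Psi(c_4(z)) = 0$ for all $z$. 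The degree is $[\psl{2}{q}:\mathbb{M}_r] = \frac{|\psl{2}{q}|}{|\mathbb{M}_r|} = \frac{q(q^2-1)/2}{q(q-1)/(2r)} = r(q+1)$, which should match the degree of the claimed decomposition: $1 + q + 2\cdot\frac{r-1}{2}(q+1) = 1 + q + (r-1)(q+1) = r(q+1)$, a reassuring consistency check.

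The second step is to compute each multiplicity $\langle \Psi, \chi\rangle = \frac{1}{|G|}\sum_g \Psi(g)\overline{\chi(g)}$ by summing over conjugacy classes, weighting by class sizes from Table~\ref{tab:char-table}. Since $\Psi$ vanishes on the classes of type (T\ref{type6}) and on $c_3(\omega^i)$ with $r \nmid i$, only the identity, the two transvection classes, and the classes $c_3(\omega^i)$ with $r \mid i$ contribute. For the constituents, I expect $\langle \Psi, \rho'(1)\rangle = 1$ (the trivial character always appears once in a transitive permutation character, by the orbit-counting lemma), and $\langle \Psi, \overline{\rho}(1)\rangle = 1$. The crux is the contribution from the principal series characters $\rho(\alpha)$: using the observation recorded just before the lemma that $\alpha_{\frac{q-1}{r}i}$ restricts trivially to $\langle \omega^r\rangle$, the sum $\sum_i \chi(c_3(\omega^i))$ over $r \mid i$ will single out exactly those $\rho(\alpha_j)$ with $j \equiv 0 \pmod{\frac{q-1}{r}}$, each appearing with multiplicity $2$, giving the $\frac{r-1}{2}$ distinct characters in the stated sum. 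Every other $\rho(\alpha)$, the discrete series $\pi(\chi)$, and the half-discrete $\omega_e^{\pm}$ should yield multiplicity $0$.

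\textbf{The main obstacle} will be the principal-series character sum: correctly evaluating $\sum_{\substack{1 \le i \le (q-1)/4 \\ r \mid i}} \bigl(\alpha(\omega^i) + \alpha(\omega^{-i})\bigr)$ and tracking how the identification $\rho(\alpha) = \rho(\alpha^{-1})$ (together with the constraint $\alpha(-1)=1$, i.e. $\alpha = \alpha_{2m}$ for even index) collapses the count to exactly $\frac{r-1}{2}$ characters each with multiplicity $2$. The subtlety is that the index set in Lemma~\ref{lem:c3fixss} runs over $\{1,\dots,\frac{q-1}{4}\}$ — a set of conjugacy-class representatives, not all of $\langle\omega\rangle$ — so I would first rewrite the character-sum orthogonality over the full cyclic group $\mathbb{F}_q^*$ and then account for the fourfold identification $c_3(x) \sim c_3(x^{-1}) \sim c_3(-x) \sim c_3(-x^{-1})$ that defines these classes, being careful about the boundary class $c_3(\sqrt{-1})$ of type (T5). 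A clean way to organize this is to fold $\Psi$ back into a genuine virtual-character identity over all of $\mathbb{F}_q^*$ via the standard principal-series orthogonality relations, so that the restriction-to-$\langle\omega^r\rangle$ computation becomes a single application of the indicator-function identity $\frac{1}{|\langle\omega^r\rangle|}\sum_{\alpha} \alpha = \mathbf{1}_{\langle\omega^r\rangle}$. Once the principal-series multiplicities are pinned down, verifying that all remaining constituents vanish is routine, and matching total degrees closes the argument.
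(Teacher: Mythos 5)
Your plan is correct in substance and would prove the lemma, but it follows a genuinely different route from the paper. The paper's proof is a pointwise \emph{verification}: since class functions are determined by their values on conjugacy class representatives, it evaluates both $\Psi$ (via Lemmas~\ref{lem1}, \ref{lem:c3fixss} and \ref{lem:c4isder}) and the claimed right-hand side (via Table~\ref{tab:char-table} and short root-of-unity sums) on each class and checks equality; the characters $\pi(\chi)$ and $\omega_e^{\pm}$ never enter the argument, and no class sizes are needed. You instead \emph{derive} the decomposition by computing every multiplicity $\langle \Psi,\chi\rangle$. This buys independence from the stated answer and shows directly that no further constituents occur, but it costs extra bookkeeping: class sizes, the folding of the representative set $\{1,\dots,\tfrac{q-1}{4}\}$ against the identification $c_3(x)\sim c_3(x^{-1})\sim c_3(-x)\sim c_3(-x^{-1})$ (which you correctly flag, including the boundary class $c_3(\sqrt{-1})$), and---the one point where ``routine'' is an overstatement---the characters $\omega_e^{\pm}$. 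Table~\ref{tab:char-table} does not give $\omega_e^{\pm}(1,1)$ and $\omega_e^{\pm}(1,\Delta)$ explicitly (they are Gauss-sum expressions $\tfrac{1\pm\sqrt{q}}{2}$, $\tfrac{1\mp\sqrt{q}}{2}$), so you cannot compute $\langle\Psi,\omega_e^{+}\rangle$ and $\langle\Psi,\omega_e^{-}\rangle$ separately from the data in the paper alone. The clean repair is to compute the single inner product $\langle \Psi,\ \omega_e^{+}+\omega_e^{-}\rangle$, whose ingredient values \emph{are} known ($q+1$, $1$, $1$, $2\zeta(x)$, $2\zeta(\sqrt{-1})$, $0$ on the respective class types); a short calculation shows it vanishes, and since multiplicities are non-negative integers this forces both multiplicities to be $0$. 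With that one adjustment (or by importing the explicit Gauss-sum values from \cite{adams2002character}), your argument goes through; your degree check and the orbit-counting argument for $\langle\Psi,\rho^{\prime}(1)\rangle=1$ agree with what the paper does on the identity class.
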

			\begin{proof}
				Note that  
				\begin{align*}
					\Psi\colon \psl{2}{q} &\rightarrow \mathbb{C}\\
					B& \mapsto\Psi(B)=\mathbf{fix}_{\mathbb{M}_r}^{\psl{2}{q}}(B) = \left|\left\{ x\mathbb{M}_r \in \psl{2}{q}/\mathbb{M}_r : x^{-1}Bx \in \mathbb{M}_r \right\}\right|.
				\end{align*}
				Since characters are constant on conjugacy classes, it is enough to show that 
				$$\Psi = \rho^{\prime}(1) + \overline{\rho}(1) +  2\sum_{j = 1}^{\tfrac{r-1}{2}} \rho\left(\alpha_{\frac{q-1}{r}j}\right),$$ on representatives of the conjugacy classes. 
				\begin{itemize}
					\item We see that $\Psi(\overline{I}) =|\psl{2}{q}/\mathbb{M}_r|= r(q+1)$. Moreover, we have
					\begin{align*}
						\rho^\prime (1)(\overline{I}) + \overline{\rho}(1)(\overline{I}) + 2\sum_{j = 1}^{\frac{r-1}{2}} \rho\left( \alpha_{\frac{q-1}{r}j} \right) (\overline{I}) = 1+q + (r-1)(q+1) = r(q+1).
					\end{align*}
					\item For the conjugacy class containing $c_2(1)$, we have $\Psi(c_2(1)) = r$, by Lemma \ref{lem1}. Moreover, we have
					\begin{align*}
						\rho^\prime(1) (c_2(1)) + \overline{\rho}(1) (c_2(1)) + 2 \sum_{j = 1}^{\frac{r-1}{2}} \rho \left(\alpha_{\frac{q-1}{r}j}\right) (c_2(1)) &= 1 + 0 +2\times \tfrac{r-1}{2} = r.
					\end{align*}
					Similarly, the equality holds for $c_2(\Delta)$.
					\item For the conjugacy class containing $c_3(\omega^i)$ ($1 \leq i \leq \tfrac{q-5}{4}$), by Lemma \ref{lem:c3fixss}, we have
					\begin{align*}
						\Psi(c_3(\omega^i)) =
						\begin{cases}
							2r & \mbox{ if }i \equiv 0 \pmod {r}, \\
							0 & \mbox{ otherwise.}
						\end{cases} 
					\end{align*}
					If $i \equiv 0 \pmod{r}$, then 
					\begin{align*}
						\rho^{\prime}(1) (c_3(\omega^i)) + \overline{\rho}(1) (c_3(\omega^i)) +  2\sum_{j = 1}^{\tfrac{r-1}{2}} \rho\left(\alpha_{\frac{q-1}{r}j}\right) (c_3(\omega^i)) &=
						1 + 1 + 2 \times \frac{r-1}{2} \times 2 = 2r.
					\end{align*}
					 If $i \not \equiv 0 \pmod{r}$, then 
					\begin{align*}
						\rho^{\prime}(1) (c_3(\omega^i)) + \overline{\rho}(1) (c_3(\omega^i)) +  2\sum_{j = 1}^{\tfrac{r-1}{2}} \rho\left(\alpha_{\frac{q-1}{r}j}\right) (c_3(\omega^i)) &=
						1 + 1 + 2 \times \sum_{j=1}^{r-1}\left(\xi^{\frac{q-1}{r}i}\right)^{j}  = 2-2 = 0.
					\end{align*}
					\item For the conjugacy class of $c_3(\sqrt{-1})$, we have $\Psi(c_3(\sqrt{-1})) = 2r$, by Lemma \ref{lem:c3fixss}. Moreover, we have 
					\renewcommand\thefootnote{\fnsymbol{footnote}}
					\begin{align*}
						\rho^{\prime}(1) (c_3(\sqrt{-1})) + \overline{\rho}(1) (c_3(\sqrt{-1})) +  2\sum_{j = 1}^{\tfrac{r-1}{2}} \rho\left(\alpha_{\frac{q-1}{r}j}\right) (c_3(\sqrt{-1})) &= 1+1+ 2 \sum_{j=1}^{\frac{r-1}{2}} 2 \xi^{\frac{(q-1)^2}{4r}j} \\&=  1 + 1 + 2 \times \tfrac{r-1}{2} \times 2 \footnotemark\\
						&= 2r.
					\end{align*}
					\footnotetext{as $4r\mid(q-1)$}
					\item For the conjugacy class $c_4(z)$, where  $z \in \operatorname{E}_q\setminus\{\pm 1\}$, we have $\Psi(c_4(z)) = 0$, by Lemma \ref{lem:c4isder}. Moreover, 
					\begin{align*}
						\rho^{\prime}(1)(c_4(z)) + \overline{\rho}(1)(c_4(z)) +  2\sum_{j = 1}^{\tfrac{r-1}{2}} \rho\left(\alpha_{\frac{q-1}{r}j}\right)(c_4(z)) &= 1-1 = 0.
					\end{align*}
				\end{itemize}
				Hence, we conclude that $\Psi = \rho^{\prime}(1) + \overline{\rho}(1) +  2\sum_{j = 1}^{\tfrac{r-1}{2}} \rho\left(\alpha_{\frac{q-1}{r}j}\right)$.
			\end{proof}
			
			\subsection{Character values on derangements}\label{subs3}
			Next, we compute some character values of $\psl{2}{q}$ that are needed in the proof of the main result. We first note that there are
			
				\begin{align*}
					\frac{q-5}{4}- \left\lfloor \frac{q-5}{4r} \right\rfloor=\frac{q-1}{4} - \frac{q-1}{4r} = \frac{(r-1)(q-1)}{4r},
				\end{align*}
elements in $\mathcal{I}_q$ (see Notations~\ref{not1}) that are not multiple of $r$. 
			\begin{note}
				Let $\mathcal{J}_q = \left\{ i \in \mathcal{I}_q : r \nmid i \right\}$.\label{note2}
			\end{note}
			
			\begin{lem}
				If $\alpha \in \operatorname{Irr}(\mathbb{F}_q^*)$ is a non-trivial character {with $\alpha(-1)=1$}, then
				\begin{align*}
					\sum_{i \in \mathcal{J}_q} \left(\alpha(\omega^i) + \alpha(\omega^{-i})\right)
					=
					\begin{cases}
						-\frac{q-1}{2r} & \mbox{ if } \alpha_{|\langle \omega^r \rangle} \mbox{ is the trivial character of $\langle \omega^r \rangle$,} \\
						0 & \mbox{ otherwise.}
					\end{cases}
				\end{align*}\label{lem:first}
			\end{lem}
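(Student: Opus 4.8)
The plan is to reduce the sum to a geometric series of roots of unity. Set $m := \frac{q-1}{2}$ and $\eta := \alpha(\omega)$, so that $\alpha(\omega^i) = \eta^i$ for every $i$. Since $-1 = \omega^{m}$ and $\alpha(-1) = 1$ by hypothesis, we get $\eta^{m} = 1$, i.e.\ $\eta$ is an $m$-th root of unity; and $\eta \neq 1$ because $\alpha$ is non-trivial. The two hypotheses then read off cleanly: $\alpha_{|\langle \omega^r\rangle}$ is the trivial character exactly when $\alpha(\omega^r) = \eta^r = 1$, and in general $\alpha(\omega^{-i}) = \eta^{-i} = \eta^{m-i}$.

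First I would fold the sum onto a single range of exponents. Note that $\mathcal{I}_q = \{1, \ldots, \frac{q-5}{4}\} = \{1, \ldots, \tfrac{m}{2}-1\}$, so the map $i \mapsto m-i$ carries this lower half bijectively onto the upper half $\{\tfrac m2+1, \ldots, m-1\}$. Using $\alpha(\omega^{-i}) = \eta^{m-i}$,
\begin{align*}
\sum_{i\in\mathcal{J}_q}\left(\alpha(\omega^i)+\alpha(\omega^{-i})\right) = \sum_{i\in\mathcal{J}_q}\eta^i + \sum_{i\in\mathcal{J}_q}\eta^{m-i} = \sum_{j \in \mathcal{J}_q \cup (m-\mathcal{J}_q)}\eta^j ,
\end{align*}
a disjoint union since the first index set lies in the lower half and the second in the upper half. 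The crux is to identify this index set. Because $r\mid m$, we have $r\mid i \iff r\mid(m-i)$, so $m-\mathcal{J}_q$ is precisely the set of upper-half integers not divisible by $r$. Crucially $\tfrac m2$ is itself a multiple of $r$: here the hypotheses enter, since $q\equiv 1\pmod 4$ forces $m$ to be even, and an odd $r$ dividing $m$ must divide $\tfrac m2$. Hence the only exponent omitted from the two halves, namely $\tfrac m2$, is a multiple of $r$, and therefore
\begin{align*}
\mathcal{J}_q\cup(m-\mathcal{J}_q) = \left\{\, j : 1\leq j\leq m-1,\ r\nmid j \,\right\}.
\end{align*}

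It then remains to evaluate $S := \sum_{1\le j\le m-1,\ r\nmid j}\eta^j$ by splitting off the multiples of $r$:
\begin{align*}
S = \sum_{j=1}^{m-1}\eta^j - \sum_{k=1}^{\tfrac mr - 1}\eta^{rk}.
\end{align*}
Since $\eta\neq 1$ and $\eta^m=1$, the first sum equals $-1$. If $\eta^r=1$ the second sum is $\tfrac mr - 1$, giving $S = -1-(\tfrac mr-1) = -\tfrac mr = -\frac{q-1}{2r}$; if $\eta^r\neq 1$, then $\eta^r$ is a $\left(\tfrac mr\right)$-th root of unity different from $1$, so the second sum equals $-1$ and $S=0$. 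This is exactly the asserted dichotomy. I expect the only genuinely delicate point to be the folding and counting step, and in particular the verification that the discarded middle exponent $\tfrac m2$ is divisible by $r$, so that the two halves recombine into all non-multiples of $r$ in $\{1,\ldots,m-1\}$; everything else is a routine summation of roots of unity.
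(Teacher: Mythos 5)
Your proof is correct; every step checks out, including the two places where the hypotheses genuinely matter: $\alpha(-1)=1$ gives $\eta^{m}=1$ (with $m=\frac{q-1}{2}$ and $\eta=\alpha(\omega)$), and the oddness of $r$ together with $q\equiv 1\pmod 4$ gives $r\mid \frac{m}{2}$, so the folded index set really is all of $\{1,\dots,m-1\}$ minus the multiples of $r$. Your route differs from the paper's in its bookkeeping. The paper starts from orthogonality over the whole group, $0=\sum_{i=1}^{q-1}\alpha(\omega^i)$, splits the exponents into multiples and non-multiples of $r$, and identifies the non-multiples as the four-fold translates $\pm\omega^{\pm i}$, $i\in\mathcal{J}_q$; by $\alpha(-1)=1$ these collapse to twice the target sum, so the target equals $-\frac{1}{2}\sum_{x\in\langle\omega^r\rangle}\alpha(x)$, which is then evaluated by orthogonality on the subgroup $\langle\omega^r\rangle$. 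You instead fold only two-fold ($i\mapsto m-i$) over the half-range $\{1,\dots,m-1\}$ and finish with explicit geometric series of roots of unity. The two computations are the same in substance --- both reduce to the dichotomy that a character sum over the multiples of $r$ is either full or vanishes --- but yours is marginally more self-contained (no appeal to character orthogonality, only geometric series), and it makes explicit a point the paper's decomposition leaves implicit: that the exceptional exponents ($\frac{m}{2}$ in your version; $\frac{m}{2}$, $m$, $\frac{3m}{2}$ in the paper's full-range version) are divisible by $r$, which is precisely where $r$ odd and $q\equiv 1 \pmod 4$ are needed for the decomposition to close up exactly.
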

			\begin{proof}
				Since $\alpha$ is non-trivial, by orthogonality of characters, we have
				\begin{align*}
					0 = \sum_{i=1}^{q-1} \alpha(\omega^i) &= \sum_{r \mid i} \alpha(\omega^{i}) + \sum_{r \nmid i} \alpha(\omega^i)
					\\
					&=\sum_{i = 1}^{\frac{q-1}{r}} \alpha(\omega^{ri}) + \sum_{i \in \mathcal{J}_q} \left(\alpha(\omega^i)+ \alpha(\omega^{-i})\right) + \sum_{i \in \mathcal{J}_q} \left(\alpha(-\omega^i)+ \alpha(-\omega^{-i})\right)\\
					&= \sum_{i = 1}^{\frac{q-1}{r}} \alpha(\omega^{ri}) + 2 \sum_{i \in \mathcal{J}_q} \left(\alpha(\omega^i)+ \alpha(\omega^{-i})\right) .
				\end{align*}
				Therefore, we get
				\begin{align*}
					\sum_{i\in \mathcal{J}_q} (\alpha(\omega^i) + \alpha(\omega^{-i})) &= - \frac{1}{2} \sum_{i=1}^{\frac{q-1}{r}}\alpha(\omega^{ri}) = -\frac{1}{2} \sum_{x \in \langle \omega^r\rangle} \alpha(x).
				\end{align*}
				The right-hand-side of the above equation depends on the restriction of $\alpha$ onto the subgroup $\langle \omega^r\rangle$. If this restriction is non-trivial, then the right-hand-side is equal to $0$ by orthogonality of characters, otherwise it is equal to $-\frac{q-1}{2r}$. This completes the proof. 
			\end{proof}
		
			\begin{lem}
				If $\chi \in \operatorname{Irr}(E_q)$ is a non-trivial character  such that $\chi(-1)=1$, then we have
				\begin{align*}
					\sum_{z \in \mathcal{Z}_q} (\chi(z) + \chi(z^{-1})) = -1.
				\end{align*}\label{lem:second}
			\end{lem}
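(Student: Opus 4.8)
The plan is to follow the same strategy as in the proof of Lemma~\ref{lem:first}, but replacing the orthogonality relation on $\mathbb{F}_q^*$ by the one on the group $E_q$. Recall that $E_q$, being the kernel of the norm map $\mathbb{F}_{q^2}^* \to \mathbb{F}_q^*$, is cyclic of order $q+1$, and that $\chi$ is a non-trivial irreducible (hence linear) character of this abelian group. First I would record the orthogonality relation
\[
\sum_{z \in E_q} \chi(z) = 0,
\]
which holds precisely because $\chi$ is non-trivial. The rest of the argument consists of reorganizing this sum according to the equivalence $z \sim z^{-1} \sim -z \sim -z^{-1}$ that defines the type (T\ref{type6}) conjugacy classes, for which $\mathcal{Z}_q$ is a transversal.

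Next I would verify how $E_q \setminus \{\pm 1\}$ decomposes under this equivalence. The key point, and the place where the hypothesis $q \equiv 1 \pmod 4$ is used, is that each class $\{z, z^{-1}, -z, -z^{-1}\}$ has exactly four elements. A degeneracy would force (up to symmetry) one of $z = z^{-1}$, $z = -z$, or $z = -z^{-1}$: the first gives $z = \pm 1$, which is excluded; the second is impossible in odd characteristic; and the last gives $z^2 = -1$. But $E_q$ is cyclic of order $q+1 \equiv 2 \pmod 4$, so it has no element of order $4$ and therefore no square root of $-1$. Hence $E_q \setminus \{\pm 1\}$ is the disjoint union of exactly $\frac{q-1}{4}$ four-element classes, in agreement with $|\mathcal{Z}_q| = \frac{q-1}{4}$ from Notations~\ref{not1}.

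Then I would bring in the hypothesis $\chi(-1) = 1$. Since $\chi$ is multiplicative, $\chi(-z) = \chi(z)$ and $\chi(-z^{-1}) = \chi(z^{-1})$, so each four-element class with representative $z \in \mathcal{Z}_q$ contributes $2\bigl(\chi(z) + \chi(z^{-1})\bigr)$ to the sum over $E_q \setminus \{\pm 1\}$. Splitting off the two terms $z = \pm 1$ then yields
\[
0 = \sum_{z \in E_q} \chi(z) = \chi(1) + \chi(-1) + 2 \sum_{z \in \mathcal{Z}_q}\bigl(\chi(z) + \chi(z^{-1})\bigr) = 2 + 2 \sum_{z \in \mathcal{Z}_q}\bigl(\chi(z) + \chi(z^{-1})\bigr),
\]
and solving gives the claimed value $-1$.

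The computation itself is entirely routine; the only genuine subtlety, which I regard as the main obstacle, is the degeneracy check in the second step, namely justifying that no class collapses to fewer than four elements. This reduces to the clean observation that $-1$ is a non-square-root in $E_q$ when $q \equiv 1 \pmod 4$, i.e.\ that $4 \nmid q+1$. Note also that, unlike Lemma~\ref{lem:first}, no case distinction on the restriction of $\chi$ is needed here, since $\mathcal{Z}_q$ is a full transversal of $E_q \setminus \{\pm 1\}$ rather than a restricted index set, so the single value $-1$ is forced directly by orthogonality.
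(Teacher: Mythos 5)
Your proof is correct and follows essentially the same route as the paper's: both rest on the orthogonality relation $\sum_{z \in E_q}\chi(z) = 0$ for the cyclic group $E_q$ of order $q+1$, split off the terms at $\pm 1$, and use $\chi(-z)=\chi(z)$ (from $\chi(-1)=1$) to collapse the sum over the four-element classes $\{z, z^{-1}, -z, -z^{-1}\}$ represented by $\mathcal{Z}_q$. The only difference is cosmetic: the paper picks a generator $\omega'$ and takes $\mathcal{Z}_q = \{\omega'^{i} : 1 \leq i \leq \tfrac{q-1}{4}\}$ without comment, whereas you explicitly verify that each class has exactly four elements (no square root of $-1$ in $E_q$ since $4 \nmid q+1$), a detail the paper leaves implicit in its ``without loss of generality'' step.
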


			\begin{proof}
			Let $E_q=\langle \omega' \rangle$. Note that $\omega'$ has order $q+1$. Without loss of generality, we can assume $\mathcal{Z}_q=\left\lbrace \omega'^i \mid 1 \leq i \leq \tfrac{q-1}{4}\right\rbrace$. Since
			$$
			0=\sum_{z \in 	E_q}\chi(z)=\chi(\omega'^0)+\chi\left(\omega'^{\tfrac{q+1}{2}}\right)+\sum_{i=1}^{\tfrac{q-1}{4}}\left(\chi(\omega'^i)+\chi(\omega'^{-i})\right)+\chi\left(\omega'^{\tfrac{q+1}{2}}\right)\sum_{i=1}^{\tfrac{q-1}{4}}\left(\chi(\omega'^i)+\chi(\omega'^{-i})\right),
			$$
			we get
			$$
			\sum_{z \in 	\mathcal{Z}_q}(\chi(z)+\chi(z^{-1}))=-\frac{\chi(\omega'^0)+\chi\left(\omega'^{\tfrac{q+1}{2}}\right)}{1+\chi\left(\omega'^{\tfrac{q+1}{2}}\right)}=-\frac{\chi(1)+\chi(-1)}{1+\chi(-1)}=-1,
			$$
			which completes the proof.
			\end{proof}

			\begin{lem}
				Let $\mathcal{J}_q$ be as in Notations~\ref{note2}. Then, we have 
				\begin{align*}
					\sum_{i \in \mathcal{J}_q} \zeta(\omega^i) = 
					0,
				\end{align*} \label{lem:third}
where $\zeta \in \operatorname{Irr}(\mathbb{F}_q^*)$ denotes the unique irreducible character of $\mathbb{F}_q^*$ such that $\zeta(\omega) = -1$.			
			\end{lem}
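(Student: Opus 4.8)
The plan is to obtain this as a special case of Lemma~\ref{lem:first} applied to $\alpha=\zeta$. Recall that $\zeta\in\operatorname{Irr}(\mathbb{F}_q^*)$ is determined by $\zeta(\omega)=-1$, so since $\zeta$ is a homomorphism we have $\zeta(\omega^i)=(-1)^i$ for every integer $i$; in particular $\zeta(\omega^{-i})=(-1)^{-i}=(-1)^i=\zeta(\omega^i)$, so the two summands occurring in Lemma~\ref{lem:first} coincide for $\alpha=\zeta$. To invoke that lemma I would first verify its hypotheses: $\zeta$ is non-trivial because $\zeta(\omega)=-1\neq 1$, and $\zeta(-1)=\zeta\!\left(\omega^{(q-1)/2}\right)=(-1)^{(q-1)/2}=1$, where the last equality uses $q\equiv 1\pmod 4$, so that $(q-1)/2$ is even.

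Applying Lemma~\ref{lem:first} with $\alpha=\zeta$ then gives
\begin{align*}
	2\sum_{i\in\mathcal{J}_q}\zeta(\omega^i)=\sum_{i\in\mathcal{J}_q}\left(\zeta(\omega^i)+\zeta(\omega^{-i})\right)=
	\begin{cases}
		-\frac{q-1}{2r} & \text{if } \zeta_{|\langle\omega^r\rangle}\text{ is trivial},\\
		0 & \text{otherwise.}
	\end{cases}
\end{align*}
It remains to decide which case occurs, and here the hypothesis that $r$ is odd is decisive: $\zeta(\omega^r)=(-1)^r=-1\neq 1$, so the restriction of $\zeta$ to $\langle\omega^r\rangle$ sends the generator $\omega^r$ to $-1$ and is therefore non-trivial. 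Hence the right-hand side equals $0$, and dividing by $2$ yields $\sum_{i\in\mathcal{J}_q}\zeta(\omega^i)=0$, as claimed.

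I do not expect a real obstacle; the whole argument hinges on two parity remarks—that $q\equiv 1\pmod 4$ forces $\zeta(-1)=1$ (needed to apply Lemma~\ref{lem:first}) and that $r$ odd forces $\zeta_{|\langle\omega^r\rangle}$ non-trivial (needed to land in the vanishing case)—both of which are immediate. Should a self-contained argument be preferred, one can instead write $\sum_{i\in\mathcal{J}_q}(-1)^i=\sum_{i=1}^{(q-5)/4}(-1)^i-\sum_{j=1}^{\lfloor (q-5)/(4r)\rfloor}(-1)^{rj}$, use $(-1)^{rj}=(-1)^j$ since $r$ is odd, and note that the upper limits $\frac{q-5}{4}$ and $\lfloor\frac{q-5}{4r}\rfloor$ differ by $|\mathcal{J}_q|=\frac{(r-1)(q-1)}{4r}$, which is even because $r$ odd together with $r\mid\frac{q-1}{2}$ gives $r\mid\frac{q-1}{4}$ while $r-1$ is even; since $\sum_{i=1}^N(-1)^i$ depends only on the parity of $N$, the two alternating sums are equal and their difference vanishes.
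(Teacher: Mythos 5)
Your proof is correct, but your primary route is genuinely different from the paper's. The paper proves Lemma~\ref{lem:third} by a direct cancellation: it writes $\sum_{i\in\mathcal{J}_q}\zeta(\omega^i)$ as the full alternating sum $\sum_{i=1}^{(q-1)/4}(-1)^i$ minus the sum over multiples of $r$, namely $\sum_{i=1}^{(q-1)/(4r)}(-1)^{ri}$, then uses $(-1)^r=-1$ (as $r$ is odd) and the fact that $\frac{q-1}{4}$ and $\frac{q-1}{4r}$ have the same parity (their ratio is the odd number $r$) to conclude the two alternating sums coincide, so the difference vanishes. You instead deduce the lemma as a corollary of Lemma~\ref{lem:first} applied to $\alpha=\zeta$: your verification of its hypotheses is correct ($\zeta$ is non-trivial; $\zeta(-1)=(-1)^{(q-1)/2}=1$ precisely because $q\equiv 1\pmod 4$), your identification of the vanishing case is correct ($\zeta(\omega^r)=(-1)^r=-1$, so $\zeta_{|\langle\omega^r\rangle}$ is non-trivial since $r$ is odd), and the final halving step is justified by $\zeta(\omega^{-i})=(-1)^{-i}=\zeta(\omega^i)$. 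Since Lemma~\ref{lem:first} is proved independently in the paper, there is no circularity, and your reduction is sound. Your route is the more economical one: it reuses the orthogonality computation already carried out for Lemma~\ref{lem:first} rather than redoing a cancellation by hand, and it makes transparent exactly where the two standing hypotheses ($q\equiv 1\pmod 4$ and $r$ odd) enter. The paper's computation---which is essentially your self-contained fallback, with summation limits $\frac{q-1}{4}$ and $\frac{q-1}{4r}$ in place of your $\frac{q-5}{4}$ and $\bigl\lfloor\frac{q-5}{4r}\bigr\rfloor$---buys only independence from Lemma~\ref{lem:first}.
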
	
				\begin{proof}
					We have
					$$
					\sum_{i=1}^{\frac{q-1}{4}}\zeta(\omega^i)=\sum_{i \in \mathcal{J}_q}\zeta(\omega^i)+\sum_{i=1,r|i}^{\frac{q-1}{4}}\zeta(\omega^i)=\sum_{i \in \mathcal{J}_q}\zeta(\omega^i)+\sum_{i=1}^{\frac{q-1}{4r}}\zeta((\omega^r)^i),
					$$
					and so
					$$
					\sum_{i \in \mathcal{J}_q}\zeta(\omega^i)=\sum_{i=1}^{\frac{q-1}{4}}\zeta(\omega^i)-\sum_{i=1}^{\frac{q-1}{4r}}\zeta((\omega^r)^i)=\sum_{i=1}^{\frac{q-1}{4}}(-1)^i-\sum_{i=1}^{\frac{q-1}{4r}}((-1)^r)^i=\sum_{i=1}^{\frac{q-1}{4}}(-1)^i-\sum_{i=1}^{\frac{q-1}{4r}}(-1)^i=0.
					$$
					For the last equality, we use the fact that $(-1)^r=-1$ (since $r$ is odd); and the fact that $\frac{q-1}{4}$ and $\frac{q-1}{4r}$ have the same parity, combined with
					$$
					\sum_{i=1}^j(-1)^i=\begin{cases}
						0 & \textrm{if}\,\, j \equiv 0 \pmod{2},\\
						-1 & \textrm{otherwise}.
					\end{cases}
					$$
				\end{proof}
			
			\subsection{Proof of Theorem \ref{thm2}}\label{subs4}
			Let $\Gamma_q$ be the derangement graph of the action of $\psl{2}{q}$ on cosets of $\mathbb{M}_r$. Consider the following weighted adjacency matrix of $\Gamma_q$
			\begin{align}\label{eq:B}
				\displaystyle B_q = \left(\frac{(q+1)r - (q+1)}{2 \left(q(q^2-1)(\frac{r-1}{4r})\right)}\right) \sum_{i \in \mathcal{J}_q} C_3(i) + \left( \frac{(q+1)r + (q-1)}{2\left(\frac{q(q-1)^2}{4}\right)} \right) \sum_{z\in \mathcal{Z}_q}C_4(z).
			\end{align}
			
			Using the formula in Theorem~\ref{thm:evalues} for eigenvalues, Lemma~\ref{lem:first}, Lemma~\ref{lem:second} and Lemma~\ref{lem:third} for the character values, we deduce that the  eigenvalues of $B_q$ are given in the following table.
			\begin{table}[H]
				\begin{tabular}{|c|c|c|c|c|c|c|}
					\hline
					&$\rho^\prime(1)$& $\overline{\rho}(1)$& $\rho(\alpha)\,\, (\alpha_{|\langle \omega^r \rangle}=1)$ & $\rho(\alpha)\,\,(\alpha_{|\langle \omega^r \rangle}\neq 1)$ & $\pi(\chi)$ & $\omega_e^\pm$\\
					\hline\hline
					Eigenvalue & $r(q+1)-1$  & $-1$  &  $-1$&$0$  & $2 \left(\frac{r+1}{q-1}+ \frac{2r}{(q-1)^2}\right)$&$0$\\
					\hline
				\end{tabular}\caption{Eigenvalues of the matrix $B_q$} \label{lem:eigenvalues-adj}
			\end{table}				
			
			Using Theorem~\ref{lem:ratio-bound} and the values in Table~\ref{lem:eigenvalues-adj}, we conclude that
			\begin{align*}
				\alpha(\Gamma_q) \leq \frac{|\psl{2}{q}|}{1- \frac{r(q+1)-1}{-1}} = \frac{|\psl{2}{q}|}{r(q+1)},
			\end{align*} 
			which coincides with the order of a stabilizer of a point. In other words, we get $\rho(\psl{2}{q},\mathbb{M}_r) = 1$. This completes the proof of Theorem \ref{thm2}.
					
		In addition, as mentioned in the introduction, we can also deduce the following slightly stronger result.
					
		\begin{cor}\label{rem:pluthm6}
		If $S \subset \psl{2}{q}$ is a coclique of maximum size of $\Gamma_q$, then 
					\begin{align*}
						v_S \in V_{\rho^\prime(1)} \oplus V_{\overline{\rho}(1)} 	\oplus  \bigoplus_{i=1}^{\frac{r-1}{2}} V_{\rho\left(\alpha_{\frac{q-1}{r}i}\right)}.
					\end{align*}\label{cor}
		\end{cor}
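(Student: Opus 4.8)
The plan is to exploit the equality clause of the Ratio bound (Theorem~\ref{lem:ratio-bound}) applied to the very weighted adjacency matrix $B_q$ of \eqref{eq:B} that was used to prove Theorem~\ref{thm2}. First I would record that a maximum coclique $S$ genuinely attains that bound: by Theorem~\ref{thm2} we have $\rho(\psl{2}{q},\mathbb{M}_r)=1$, so $|S|=\alpha(\Gamma_q)=|\mathbb{M}_r|=\frac{|\psl{2}{q}|}{r(q+1)}$, which is exactly the value the bound produced in the proof above. Reading off Table~\ref{lem:eigenvalues-adj}, the maximum eigenvalue of $B_q$ is $d=r(q+1)-1$ and the minimum is $\tau=-1$, and one checks directly that $\frac{|\psl{2}{q}|}{1-d/\tau}=\frac{|\psl{2}{q}|}{r(q+1)}$; hence equality holds in Theorem~\ref{lem:ratio-bound} for the coclique $S$.

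Next, the equality statement of Theorem~\ref{lem:ratio-bound} gives that $w:=v_S-\frac{|S|}{|\psl{2}{q}|}\mathbf{1}_{|\psl{2}{q}|}$ is an eigenvector of $B_q$ for the least eigenvalue $\tau=-1$, so $w$ lies in the $(-1)$-eigenspace $U_{-1}$. I would then invoke Theorem~\ref{thm:evalues}, which describes $U_{-1}$ as $\bigoplus_{\chi}V_\chi$, the sum running over all $\chi\in\operatorname{Irr}(\psl{2}{q})$ with $\lambda_\chi=-1$. Inspecting Table~\ref{lem:eigenvalues-adj}, the characters whose eigenvalue equals $-1$ are exactly $\overline{\rho}(1)$ together with those $\rho(\alpha)$ for which $\alpha_{|\langle\omega^r\rangle}$ is trivial. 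By the discussion preceding \eqref{eq:perm-char}, the latter are precisely the $\frac{r-1}{2}$ characters $\rho\left(\alpha_{\frac{q-1}{r}i}\right)$, $i=1,\dots,\frac{r-1}{2}$. Thus $U_{-1}=V_{\overline{\rho}(1)}\oplus\bigoplus_{i=1}^{\frac{r-1}{2}}V_{\rho\left(\alpha_{\frac{q-1}{r}i}\right)}$.

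Finally, I would note that the all-ones vector $\mathbf{1}_{|\psl{2}{q}|}$ spans the trivial isotypic component of the regular module, namely $V_{\rho^\prime(1)}$; since $\rho^\prime(1)$ has eigenvalue $r(q+1)-1\neq-1$ by Table~\ref{lem:eigenvalues-adj}, the space $V_{\rho^\prime(1)}$ meets $U_{-1}$ only in $0$. Writing $v_S=w+\frac{|S|}{|\psl{2}{q}|}\mathbf{1}_{|\psl{2}{q}|}$ then exhibits $v_S$ as the sum of a vector of $U_{-1}$ and a vector of $V_{\rho^\prime(1)}$, which yields $v_S\in V_{\rho^\prime(1)}\oplus V_{\overline{\rho}(1)}\oplus\bigoplus_{i=1}^{\frac{r-1}{2}}V_{\rho\left(\alpha_{\frac{q-1}{r}i}\right)}$, as claimed. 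The only genuinely delicate points are the bookkeeping identification of the eigenvalue-$(-1)$ constituents $\rho(\alpha)$ with the indexed family $\rho\left(\alpha_{\frac{q-1}{r}i}\right)$, and the verification that $\mathbf{1}_{|\psl{2}{q}|}$ sits exactly in the trivial component $V_{\rho^\prime(1)}$ (so that the decomposition $v_S=w+\frac{|S|}{|\psl{2}{q}|}\mathbf{1}_{|\psl{2}{q}|}$ respects the eigenspace splitting); everything else is a direct transcription of the equality case of the Ratio bound together with the eigenvalue table already established.
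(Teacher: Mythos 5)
Your proposal is correct and takes essentially the same route as the paper's proof: equality in the weighted ratio bound (Theorem~\ref{lem:ratio-bound}) applied to $B_q$, followed by the identification of the $(-1)$-eigenspace via Theorem~\ref{thm:evalues} and Table~\ref{lem:eigenvalues-adj} as $V_{\overline{\rho}(1)} \oplus \bigoplus_{i=1}^{\frac{r-1}{2}} V_{\rho\left(\alpha_{\frac{q-1}{r}i}\right)}$. You merely make explicit two points the paper leaves implicit, namely that a maximum coclique attains the bound (via Theorem~\ref{thm2} and the intersecting subgroup $\mathbb{M}_r$) and that $\mathbf{1}_{|\psl{2}{q}|}$ spans the trivial component $V_{\rho^\prime(1)}$.
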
		
		\begin{proof}
		Using Table~\ref{lem:eigenvalues-adj} the smallest eigenvalue of $B$ is $-1$ and the irreducible characters affording this eigenvalue are constituents of the permutation character of $\psl{2}{q}$ acting on $\mathbb{M}_r$. From this, we deduce using Theorem~\ref{thm:evalues} that the eigenspace of $B_q$ corresponding to the eigenvalue $-1$ is
		\begin{align*}
			 V_{\overline{\rho}(1)} \oplus  \bigoplus_{i=1}^{\frac{r-1}{2}} 	V_{\rho\left(\alpha_{\frac{q-1}{r}i}\right)}.
		\end{align*}  
		It  remains then to apply Theorem~\ref{lem:ratio-bound}.
		\end{proof}	
			
			\section{Concluding remark}
			
			The main purpose of this paper was to find rational numbers larger than $1$ in the intersection spectrum of $\psl{2}{q}$, for $q$ an odd prime power. The latter is equivalent to finding actions of $\psl{2}{q}$ that do not have the EKR property. We showed that $2 \in \sigma(\psl{2}{q})$, for $q\equiv 3 \pmod 4$. For the case $q\equiv 1 \pmod 4$, the group $\mathbb{Z}_{\frac{q-1}{2}}$ is a subgroup of $\psl{2}{q}$ and the action of $\psl{2}{q}$ on this subgroup yields an intersecting set of size $q-1$, which is given by the normalizer of $\mathbb{Z}_{\frac{q-1}{2}}$.  We were unfortunately unable to determine $\rho\left(\psl{2}{q},\mathbb{Z}_{\frac{q-1}{2}}\right)$ with the current tools at our disposal. Nevertheless, based on computational results (see Appendix~\ref{app}), we conjecture the following.
			\begin{conj}
				 For any $q\equiv 1 \pmod 4$, we have $\rho \left(\psl{2}{q},\mathbb{Z}_{\frac{q-1}{2}}\right) = 2$.
			\end{conj}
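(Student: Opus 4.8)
The plan is to mirror the proof of Theorem~\ref{thm:main}: establish the lower bound $\rho\bigl(\psl{2}{q},T\bigr)\geq 2$ by exhibiting an intersecting set of size $q-1$, and then attempt the matching upper bound $\alpha(\Gamma_q)\leq q-1$ via the weighted Ratio Bound (Theorem~\ref{lem:ratio-bound}), where $T:=\mathbb{Z}_{\frac{q-1}{2}}=\langle A\rangle$ is the split maximal torus and $\Gamma_q$ is the derangement graph of the action of $\psl{2}{q}$ on cosets of $T$.

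For the lower bound I would show, exactly as in Proposition~\ref{prop:vqinter}, that the normalizer $N_{\psl{2}{q}}(T)\cong\dih{q-1}$ is derangement-free. Since $T$ is the \emph{full} split torus, every split semisimple element is conjugate into $T$; moreover, because $q\equiv 1\pmod 4$ we have $\sqrt{-1}\in\mathbb{F}_q$, so every involution of $\psl{2}{q}$ is of split type (conjugate to $c_3(\sqrt{-1})$) and hence lies in a conjugate of $T$. Thus the dihedral group $\dih{q-1}$, whose elements are the cyclic part $T$ together with involutions, contains no derangement, so it is intersecting. As $|\dih{q-1}|=q-1=2|T|$, this gives $\rho\bigl(\psl{2}{q},T\bigr)\geq 2$.

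For the upper bound I would first record, as in Lemmas~\ref{lem:c3fixss}--\ref{lem:c4isder}, that an element is a derangement precisely when it is not conjugate into $T$, i.e. when it is unipotent (classes $c_2(1),c_2(\Delta)$) or non-split semisimple (classes $c_4(z)$, $z\in\mathcal{Z}_q$). I would then form the weighted adjacency matrix
\begin{align*}
	\mathcal{A}_q=a\bigl(C_2(1)+C_2(\Delta)\bigr)+b\sum_{z\in\mathcal{Z}_q}C_4(z),
\end{align*}
compute its spectrum from Theorem~\ref{thm:evalues} and Table~\ref{tab:char-table} (using Lemma~\ref{lem:second} to evaluate $\sum_{z}\bigl(\chi(z)+\chi(z^{-1})\bigr)=-1$ on the characters $\pi(\chi)$), and optimize $a,b$ so that the Ratio Bound returns $q-1$. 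One finds valency $a(q^2-1)+b\,\tfrac{q(q-1)^2}{4}$, eigenvalue $a(q-1)$ on $\rho(\alpha)$ and $\omega_e^{\pm}$ (always positive, never the minimum), eigenvalue $-a(q+1)+bq$ on $\pi(\chi)$, and eigenvalue $-b\,\tfrac{(q-1)^2}{4}$ on the Steinberg character $\overline{\rho}(1)$.

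The hard part --- and where I expect the plan to stall, in agreement with the authors' remark --- is exactly this optimization. Optimizing over $a,b>0$, the best bound obtainable from $\mathcal{A}_q$ is $\alpha(\Gamma_q)\leq\tfrac{(q-1)^2}{4}$, i.e. only $\rho\leq\tfrac{q-1}{2}$, which is far from $2$; allowing unequal or signed weights does not improve this. The structural reason is clean and seems fatal to the bare spectral approach: among the derangement classes the unipotent ones have Steinberg value $0$ while the non-split ones have Steinberg value $-1$, so the Steinberg eigenvalue equals $-(q-1)\sum_{z}b_z$, and it is driven strongly negative by precisely the large discrete-series classes $c_4(z)$ (each of size $q(q-1)$) whose positive weight is needed to make the valency large. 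Since no derangement class carries positive Steinberg character value, this eigenvalue cannot be lifted within the conjugacy-class scheme. Closing the gap therefore appears to require new input: either a refinement exploiting the module structure of the extremal cocliques --- here the permutation character decomposes as $\rho^\prime(1)+3\overline{\rho}(1)+2\sum_{\alpha}\rho(\alpha)+2\sum_{\chi}\pi(\chi)+\omega_e^{+}+\omega_e^{-}$, which should constrain the eigenspace a putative maximum coclique can occupy --- or a direct combinatorial analysis of $T$-intersecting sets, or a reduction to a more tractable subgroup as sought in Problem~\ref{prob2}.
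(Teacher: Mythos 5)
There is no paper proof to compare against here: the statement you were given is a \emph{conjecture} in the paper. The authors state explicitly that they ``were unfortunately unable to determine $\rho\left(\psl{2}{q},\mathbb{Z}_{\frac{q-1}{2}}\right)$ with the current tools at our disposal,'' and the only support they offer is machine computation for $q\in\{5,9,13,17\}$ in Appendix~\ref{app}. Your proposal, which also stops short of a proof, therefore lands in exactly the same place as the paper. The half you do prove is correct and coincides with the paper's own observation: since $q\equiv 1\pmod 4$ makes $-1$ a square, every involution of $\psl{2}{q}$ is conjugate to $c_3(\sqrt{-1})$ and hence lies in a conjugate of $T$, so the normalizer $\dih{q-1}$ is derangement-free, hence intersecting (as in Proposition~\ref{prop:vqinter}), giving $\rho\left(\psl{2}{q},T\right)\geq 2$. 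Your spectral computations also check out: the valency is $a(q^2-1)+b\,\frac{q(q-1)^2}{4}$; the eigenvalue on $\rho(\alpha)$ and on $\omega_e^{\pm}$ is $a(q-1)$ (using $\omega_e^{\pm}(1,1)+\omega_e^{\pm}(1,\Delta)=1$); the eigenvalue on $\pi(\chi)$ is $-a(q+1)+bq$ by Lemma~\ref{lem:second}; the Steinberg eigenvalue is $-b\,\frac{(q-1)^2}{4}$; and optimizing (the maximum of $d/|\tau|$ occurs at $a=\frac{q+1}{4}b$) yields precisely $\alpha(\Gamma_q)\leq \frac{(q-1)^2}{4}$, i.e.\ $\rho\leq\frac{q-1}{2}$, as you claim. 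Your decomposition of the permutation character, with Steinberg multiplicity $3$ and multiplicity $2$ on each $\rho(\alpha)$ and $\pi(\chi)$, is also correct.

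The genuine gap is the upper bound $\alpha(\Gamma_q)\leq q-1$, which neither you nor the paper proves; so your proposal is an accurate diagnosis of an open problem, not a proof of the conjectured equality. Your structural explanation of the obstruction is sound: the Steinberg character vanishes on the unipotent derangement classes and equals $-1$ on the classes $c_4(z)$, so any weighting heavy enough on the $c_4(z)$ to create a large valency drives the Steinberg eigenvalue $-(q-1)\sum_z b_z$ far below $-1$, and no derangement class has positive Steinberg value to compensate. Two caveats: your assertion that unequal or signed weights cannot improve the bound is stated rather than proved (and for signed weights the ratio bound must be applied with the eigenvalue afforded by the all-ones vector, which need not be the maximum eigenvalue, so that case requires separate care); and closing the gap would indeed require input beyond the conjugacy class scheme, e.g.\ the module-method refinement you sketch, a direct combinatorial argument, or the route suggested by Problem~\ref{prob2}. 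As it stands, the conjecture remains open both in your write-up and in the paper.
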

			
			In this paper, we also gave an example of an action of $\psl{2}{q}$ that has the EKR property. To classify $\sigma \left( \psl{2}{q}\right)$, one would need to determine all actions that have the EKR property. Hence, the next problem is another possible direction for future research.
			\begin{prob}
				 Classify all subgroups $H\leq \psl{2}{q}$ for which $\rho(\psl{2}{q},H) = 1$.
			\end{prob}
			In Theorem~\ref{thm2}, we found the family of subgroups $\mathbb{M}_r$ of $\psl{2}{q}$ for which we have $\rho \left(\psl{2}{q},\mathbb{M}_r\right) = 1$. When $r=1$, this action of $\psl{2}{q}$ is on the projective line $\pg{1}{q}$, and it has been proved to have the EKR and the strict-EKR property (see \cite{long2018characterization,meagher2011erdHos}). As proved in Corollary~\ref{cor}, we also obtained the irreducible modules that contain the characteristic vectors of maximum intersecting sets of this action. This fact is an intermediate step in proving the strict-EKR property. Unfortunately, we could not determine whether this group action has the strict-EKR property or no. We leave this as an open question.
			\begin{qst}
				For $q\equiv 1 \pmod 4$, does the action of $\psl{2}{q}$ on cosets of $\mathbb{M}_r$ have the strict-EKR property?
			\end{qst}

\section*{Acknowledgements}
The first author is grateful for the support of the Israel Science Foundation (grant no. 353/21). The second and the third authors are supported in part by the Ministry of
Education, Science and Sport of Republic of Slovenia (University of Primorska Developmental funding pillar).
			
			

			\appendix
			
			\section{Computations}\label{app}
			In this section, we give some computations that were done on \verb*|Sagemath| \cite{sagemath}.
			
			\twocolumn
			\begin{table}[H]
			\begin{tabular}{|c|c|}
				\hline
				Subgroups $H$ & $\rho(\psl{2}{3},H)$ \\
				\hline\hline
				\verb|1| & $1$ \\
				\verb|C2| & $2$ \\
				\verb|C3| & $1$ \\
				\verb|C2 x C2| & $1$ \\
				\verb|A4| & $1$ \\ \hline
			\end{tabular}
			\end{table} 

			\begin{table}[H]
				\begin{tabular}{|c|c|}
				\hline
				Subgroups $H$ & $\rho(\psl{2}{4},H)$ \\ \hline\hline
				\verb|1| & $1$ \\
				\verb|C2| & $2$ \\
				\verb|C3| & $\frac{4}{3}$ \\
				\verb|C2 x C2| & $1$ \\
				\verb|C5| & $1$ \\
				\verb|S3| & $2$ \\
				\verb|D5| & $1$ \\
				\verb|A4| & $1$ \\
				\verb|A5| & $1$ \\ \hline
			\end{tabular} 
			\end{table}
			
			\begin{table}[H]
				\begin{tabular}{|c|c|}
				\hline
				Subgroups $H$ & $\rho(\psl{2}{5},H)$ \\
				\hline \hline
				\verb|1| & $1$ \\
				\verb|C2| & $2$ \\
				\verb|C3| & $\frac{4}{3}$ \\
				\verb|C2 x C2| & $1$ \\
				\verb|C5| & $1$ \\
				\verb|S3| & $2$ \\
				\verb|D5| & $1$ \\
				\verb|A4| & $1$ \\
				\verb|A5| & $1$ \\
				\hline
			\end{tabular} 
			\end{table}
			
			\begin{table}[H]
				\begin{tabular}{|c|c|}
				\hline
				Subgroups $H$ & $\rho(\psl{2}{7},H)$ \\ \hline \hline
				\verb|1| & $1$ \\
				\verb|C2| & $2$ \\
				\verb|C3| & $\frac{4}{3}$ \\
				\verb|C2 x C2| & $1$ \\
				\verb|C2 x C2| & $1$ \\
				\verb|C4| & $2$ \\
				\verb|S3| & $2$ \\
				\verb|C7| & $1$ \\
				\verb|D4| & $1$ \\
				\verb|A4| & $1$ \\
				\verb|A4| & $1$ \\
				\verb|C7 : C3| & $1$ \\
				\verb|S4| & $1$ \\
				\verb|S4| & $1$ \\
				\verb|PSL(3,2)| & $1$ \\ \hline
			\end{tabular} 
			\end{table}
			
			\begin{table}[H]
				\begin{tabular}{|c|c|}
				\hline
				Subgroups $H$ & $\rho(\psl{2}{8},H)$ \\ \hline \hline
				\verb|1| & $1$ \\
				\verb|C2| & $4$ \\
				\verb|C3| & $1$ \\
				\verb|C2 x C2| & $2$ \\
				\verb|S3| & $\frac{4}{3}$ \\
				\verb|C7| & $\frac{10}{7}$ \\
				\verb|C2 x C2 x C2| & $1$ \\
				\verb|C9| & $1$ \\
				\verb|D7| & $4$ \\
				\verb|D9| & $1$ \\
				\verb|(C2 x C2 x C2) : C7| & $1$ \\
				\verb|PSL(2,8)| & $1$ \\ \hline
			\end{tabular} 
			\end{table}

			\begin{table}[H]
				\begin{tabular}{|c|c|}
				\hline
				Subgroups $H$ & $\rho(\psl{2}{9},H)$ \\ \hline\hline
				\verb|1| & $1$ \\
				\verb|C2| & $2$ \\
				\verb|C3| & $\frac{5}{3}$ \\
				\verb|C3| & $\frac{5}{3}$ \\
				\verb|C2 x C2| & $1$ \\
				\verb|C2 x C2| & $1$ \\
				\verb|C4| & $2$ \\
				\verb|C5| & $\frac{9}{5}$ \\
				\verb|S3| & $\frac{5}{2}$ \\
				\verb|S3| & $\frac{5}{2}$ \\
				\verb|D4| & $1$ \\
				\verb|C3 x C3| & $1$ \\
				\verb|D5| & $\frac{11}{10}$ \\
				\verb|A4| & $\frac{5}{4}$ \\
				\verb|A4| & $\frac{5}{4}$ \\
				\verb|(C3 x C3) : C2| & $1$ \\
				\verb|S4| & $1$ \\
				\verb|S4| & $1$ \\
				\verb|(C3 x C3) : C4| & $1$ \\
				\verb|A5| & $1$ \\
				\verb|A5| & $1$ \\
				\verb|A6| & $1$ \\ \hline
			\end{tabular} 
			\end{table}
			\newpage
			\begin{table}[H]
				\begin{tabular}{|c|c|}
				\hline
				Subgroups $H$ & $\rho(\psl{2}{11},H)$ \\ \hline \hline
				\verb|1| & $1$ \\
				\verb|C2| & $2$ \\
				\verb|C3| & $\frac{4}{3}$ \\
				\verb|C2 x C2| & $1$ \\
				\verb|C5| & $\frac{12}{5}$ \\
				\verb|S3| & $2$ \\
				\verb|S3| & $2$ \\
				\verb|C6| & $2$ \\
				\verb|D5| & $\frac{17}{10}$ \\
				\verb|C11| & $1$ \\
				\verb|A4| & $1$ \\
				\verb|D6| & $1$ \\
				\verb|C11 : C5| & $1$ \\
				\verb|A5| & $1$ \\
				\verb|A5| & $1$ \\
				\verb|PSL(2,11)| & $1$ \\
				\hline
			\end{tabular} 
			\end{table}
			\begin{table}[H]
				\begin{tabular}{|c|c|}
				\hline
				Subgroups $H$ & $\rho(\psl{2}{13},H)$ \\ \hline \hline
				\verb|1| & $1$ \\
				\verb|C2| & $2$ \\
				\verb|C3| & $\frac{4}{3}$ \\
				\verb|C2 x C2| & $1$ \\
				\verb|C6| & $2$ \\
				\verb|S3| & $2$ \\
				\verb|S3| & $2$ \\
				\verb|C7| & $\frac{9}{7}$ \\
				\verb|A4| & $1$ \\
				\verb|D6| & $1$ \\
				\verb|C13| & $1$ \\
				\verb|D7| & $\frac{10}{7}$ \\
				\verb|D13| & $1$ \\
				\verb|C13 : C3| & $1$ \\
				\verb|C13 : C6| & $1$ \\
				\verb|PSL(2,13)| & $1$ \\ \hline
			\end{tabular} 
			\end{table}
			
			\begin{table}[H]
				\begin{tabular}{|c|c|}
				\hline
				Subgroups $H$ & $\rho(\psl{2}{17},H)$ \\ \hline \hline
				\verb|1| & $1$ \\
				\verb|C2| & $2$ \\
				\verb|C3| & $1$ \\
				\verb|C2 x C2| & $1$ \\
				\verb|C2 x C2| & $1$ \\
				\verb|C4| & $2$ \\
				\verb|S3| & $2$ \\
				\verb|C8| & $2$ \\
				\verb|D4| & $1$ \\
				\verb|D4| & $1$ \\
				\verb|C9| & $\frac{11}{9}$ \\
				\verb|A4| & $1$ \\
				\verb|A4| & $1$ \\
				\verb|D8| & $1$ \\
				\verb|C17| & $1$ \\
				\verb|D9| & $1$ \\
				\verb|S4| & $1$ \\
				\verb|S4| & $1$ \\
				\verb|D17| & $1$ \\
				\verb|C17 : C4| & $1$ \\
				\verb|C17 : C8| & $1$ \\
				\verb|PSL(2,17)| & $1$ \\ \hline
			\end{tabular}
			\end{table}
			\begin{table}[H]
				\begin{tabular}{|c|c|}
					\hline
					Subgroups $H$ & $\rho(\psl{2}{19},H)$ \\ \hline\hline
					\verb|1| & $1$ \\
					\verb|C2| & $2$ \\
					\verb|C3| & $\frac{4}{3}$ \\
					\verb|C2 x C2| & $1$ \\
					\verb|C5| & $\frac{6}{5}$ \\
					\verb|S3| & $2$ \\
					\verb|C9| & $\frac{7}{3}$ \\
					\verb|D5| & $1$ \\
					\verb|D5| & $1$ \\
					\verb|C10| & $2$ \\
					\verb|A4| & $1$ \\
					\verb|D9| & $\frac{4}{3}$ \\
					\verb|C19| & $1$ \\
					\verb|D10| & $1$ \\
					\verb|C19 : C3| & $1$ \\
					\verb|A5| & $1$ \\
					\verb|A5| & $1$ \\
					\verb|C19 : C9| & $1$ \\
					\verb|PSL(2,19)| & $1$ \\ \hline
				\end{tabular}
			\end{table}
		\newpage		
		\onecolumn				
		\end{document}